\newtheorem{thrm}{Theorem}[section]
\newtheorem{lem}[thrm]{Lemma}
\theoremstyle{definition}
\numberwithin{equation}{section}
\author{Wen-ming Lu and Lin Zhang}
\address{
Wen-ming Lu\newline\indent School of Science\\Hangzhou Dianzi
University\\Hangzhou\\People's Republic of
China}\email{lu\_wenming@163.com}
\address{
Lin Zhang\newline\indent Department of Mathematics\\
Zhejiang University\\Hangzhou\\
People's Republic of China}
\email{godyalin@163.com;linyz@zju.edu.cn}
\keywords{Bernstein polynomials; Inner singularities; Pointwise
approximation; Direct and inverse theorems} \subjclass{Primary
41A10, Secondary 41A17}
\begin{document}

\title[Direct and Inverse Estimates for Combinations]{Pointwise Approximation Theorems for Combinations of Bernstein Polynomials with inner Singularities}
\begin{abstract}
We give direct and inverse theorems  for the weighted approximation
of functions with inner singularities by combinations of
Bernstein polynomials.
\end{abstract}
\maketitle
\section{Introduction} \label{sect1}
The set of all continuous functions, defined on the interval $I$, is
denoted by $C(I)$. For any $f\in C([0,1])$, the corresponding
\emph{Bernstein operators} are defined as follows:
$$B_n(f,x):=\sum_{k=0}^nf(\frac{k}{n})p_{n,k}(x),$$
where
$$p_{n,k}(x):={n \choose k}x^k(1-x)^{n-k}, \ k=0,1,2,\ldots,n, \ x\in[0,1].$$
Approximation properties of Bernstein operators have been studied
very well (see \cite{Berens}, \cite{Della},
\cite{Totik}-\cite{Lorentz}, \cite{Yu}-\cite{X. Zhou}, for example).
In order to approximate the functions with singularities, Della
Vecchia et al. \cite{Della} and Yu-Zhao \cite{Yu} introduced some
kinds of modified Bernstein operators. Throughout the paper,
$C$ denotes a positive constant independent of $n$ and $x$, which
may be different in different cases.
Ditzian and Totik
\cite{Totik} extended the method of combinations and defined the
following combinations of Bernstein operators:
\begin{align*}
B_{n,r}(f,x):=\sum_{i=0}^{r-1}C_{i}(n)B_{n_i}(f,x),
\end{align*}
with the conditions:
\begin{enumerate}[(a)]
\item $n=n_0<n_1< \cdots <n_{r-1}\leqslant
Cn,$\\
\item $\sum_{i=0}^{r-1}|C_{i}(n)|\leqslant C,$\\
\item
$\sum_{i=0}^{r-1}C_{i}(n)=1,$\\
\item $\sum_{i=0}^{r-1}C_{i}(n)n_{i}^{-k}=0$,\ for $k=1,\ldots,r-1$.
\end{enumerate}
For any positive integer $r$, we consider the determinant
\begin{eqnarray*}
A_{r}:=
\begin {matrix}
\begin{vmatrix}
1 & 1 & 1 & \cdots & 1 \\
2r+1 & 2r+2 & 2r+3 & \cdots & 4r+1 \\
(2r)(2r+1) & (2r+1)(2r+2) & (2r+2)(2r+3) & \cdots & (4r)(4r+1) \\
\cdots & \cdots & \cdots & \ddots & \cdots \\
2\cdots(2r+1) & 3\cdots(2r+2) & 4\cdots(2r+3) & \cdots &
(2r+2)\cdots(4r+1) \end{vmatrix} &
\end{matrix}.
\end{eqnarray*}
We obtain $A_{r}=\prod_{j=2}^{2r}j!$. Thus, there is a
unique solution for the system of nonhomogeneous linear equations:
\begin{align}
\left\{
   \begin{array}{ccccccccc}
   a_1 & + & a_2 & + & \cdots & + & a_{2r+1} & = &1, \\
   (2r+1)a_1 & + & (2r+2)a_2 & + & \cdots & + & (4r+1)a_{2r+1} & = &0, \\
   (2r+1)(2r)a_1 & + & (2r+1)(2r+2)a_2 & + & \cdots & + & (4r)(4r+1)a_{2r+1} & = &0, \\
   &&&\vdots&&&  \\
   (2r+1)!a_1 & + & 3 \cdots (2r+2)a_2 & + & \cdots & + & (2r+2) \cdots (4r+1)a_{2r+1} & =
   &0.
   \end{array} \right.\label{s1}
\end{align}
Let
\begin{eqnarray*}
\psi(x)=\left\{
\begin{array}{lrr}
a_1x^{2r+1}+a_2x^{2r+2}+\cdots+a_{2r+1}x^{4r+1}, &&0<x<1, \\
0,   &&x \leqslant0,  \\
1,  &&x=1.
             \end{array}
\right.
\end{eqnarray*}
with the coefficients $a_1,$ $a_2,$ $\cdots,$ $a_{2r+1}$ satisfying
(\ref{s1}). From (\ref{s1}), we see that $\psi(x)\in
C^{(2r)}(-\infty,+\infty)$, $0\leqslant\psi(x)\leqslant1$ for $0
\leqslant x\leqslant1$. Moreover, it holds that $\psi(1)=1$,\
$\psi^{(i)}(0)=0,\ i=0,1,\cdots,2r$ and $\psi^{(i)}(1)=0,\
i=1,2,\cdots,2r$. \\
Let $$H(f,x):=\sum_{i=1}^{r+1}f(x_{i})l_{i}(x),$$ and
$$l_{i}(x):=\frac{\prod_{j=1,j\neq i}^{r+1}(x-x_{j})}{\prod_{j=1,j\neq i}^{r+1}(x_{i}-x_{j})},\ x_{i}=\frac{[n\xi-({(r-1)/2}+i)]}{n},\ i=1,2, \cdots r+1.$$
Further, let
$$x_{1}^{'}=\frac{[n\xi-2\sqrt{n}]}{n},\ x_{2}^{'}=\frac{[n\xi-\sqrt{n}]}{n},\ x_{3}^{'}=\frac{[n\xi+\sqrt{n}]}{n},\ x_{4}^{'}=\frac{[n\xi+2\sqrt{n}]}{n},$$
and
$${\bar{\psi}}_{1}(x)=\psi(\frac{x-x_{1}^{'}}{x_{2}^{'}-x_{1}^{'}}),\ {\bar{\psi}}_{2}(x)=\psi(\frac{x-x_{3}^{'}}{x_{4}^{'}-x_{3}^{'}}).$$
Set
$${\bar{F}}_{n}(f,x):={\bar{F}}_{n}(x)=f(x)(1-{\bar{\psi}}_{1}(x)+{\bar{\psi}}_{2}(x))+{\bar{\psi}}_{1}(x)(1-{\bar{\psi}}_{2}(x))H(x).$$
We have
\begin{eqnarray*}
{\bar{F}}_{n}(f,x)=\left\{\begin{array}{lr}
f(x),          &       x\in [0,x_{r-5/2}]\cup [x_{r+3/2},1],   \\
f(x)(1-{\bar{\psi}}_{1}(x))+{\bar{\psi}}_{1}(x)
H(x),      &
x\in [x_{r-5/2},x_{r-3/2}],  \\
H(x),          &       x\in [x_{r-3/2},x_{r+1/2}],  \\
H(x)(1-{\bar{\psi}}_{2}(x))+{\bar{\psi}}_{2}(x)f(x), & x\in
[x_{r+1/2},x_{r+3/2}].
            \end{array}
\right.
\end{eqnarray*}
Obviously, ${\bar{F}}_{n}(f,x)$ is linear, reproduces polynomials of
degree $r$, and ${\bar{F}}_{n}(f,x)\in C^{(2r)}([0,1])$, provided
that $f \in C^{(2r)}([0,1]).$
Now, we can define our new combinations of Bernstein operators as
follows:
\begin{eqnarray}
{\bar{B}}_{n,r}(f,x):=B_{n,r}({\bar{F}_{n}},x)=\sum_{i=0}^{r-1}C_{i}(n)B_{n_i}({\bar{F}_{n}},x),\label{s2}
\end{eqnarray}
where $C_{i}(n)$ satisfy the conditions (a)-(d).
\section{The main results} \label{ns}
Let $\phi:[0,1] \longrightarrow R,\ \phi \neq 0$ be an admissible
step-weight function of the Ditzian-Totik modulus of smoothness,
that is, $\phi$ satisfies the
following conditions:\\
\begin{enumerate}[(I)]
\item For every proper subinterval $[a,b] \subseteq [0,1]$ there
exists a constant $C_1 \equiv C(a,b)>0$ such that $C_{1}^{-1}
\leqslant
\phi(x) \leqslant C_1$ for $x \in [a,b].$\\
\item There are two
numbers $\beta(0) \geqslant 0$ and $\beta(1) \geqslant 0$ for which
\begin{align*}
\phi(x)\thicksim \left\{
\begin{array}{lrr}
x^{\beta(0)},&&\ \mbox{as}\ x \rightarrow 0+,    \\
(1-x)^{\beta(1)},&&\ \mbox{as}\ x\rightarrow 1-.
              \end{array}
\right.\\
\end{align*}
($X \sim Y$ means $C^{-1}Y \leqslant X \leqslant CY \mbox{\ for\
some}\ C$).
\end{enumerate}
Combining conditions (\textbf{I}) and (\textbf{II}) on $\phi,$ we can deduce that\\
\begin{align*}
C^{-1}\phi_2(x) \leqslant \phi(x) \leqslant C\phi_2(x),\ x \in
[0,1],
\end{align*}
where $\phi_2(x) = x^{\beta(0)}(1-x)^{\beta(1)}.$\\ Let
$\bar{w}(x)=|x-\xi|^\alpha,\ 0<\xi<1,\ \alpha>0$ and $C_{\bar{w}}:=
\{{f \in C([0,1] \setminus {\xi}) :
\lim\limits_{x\longrightarrow\xi}(\bar{w}f)(x)=0 }\}.$ The norm in
$C_{\bar{w}}$ is defined as
$\|f\|_{C_{\bar{w}}}:=\|{\bar{w}}f\|=\sup\limits_{0\leqslant
x\leqslant 1}|({\bar{w}f})(x)|$. Define
\begin{align*}
W_{\phi}^{r}:= \{f \in C_{\bar{w}}:f^{(r-1)} \in A.C.((0,1)),\
\|\bar{w}\phi^{r}f^{(r)}\|<\infty\},\\
W_{\varphi,\lambda}^{r}:= \{f \in C_{\bar{w}}:f^{(r-1)} \in
A.C.((0,1)),\ \|\bar{w}\varphi^{r\lambda}f^{(r)}\|<\infty\}.
\end{align*}
For $f \in C_{\bar{w}}$, we define the \emph{weighted modulus of
smoothness} by
\begin{align*}
\omega_\phi^{r}(f,t)_{\bar{w}}:=\sup_{0<h\leqslant t}\sup_{0
\leqslant x \leqslant 1}|{\bar{w}}(x)\triangle_{h\phi(x)}^{r}f(x)|,
\end{align*}
where
\begin{align*}
\Delta_{h\phi}^{r}f(x)=\sum_{k=0}^{r}(-1)^{k}{r \choose
k}f(x+(\frac r2-k)h\phi(x)),\\
\overrightarrow{\Delta}_{h}^{r}f(x)=\sum_{k=0}^{r}(-1)^{k}{r
\choose k}f(x+(r-k)h).
\end{align*}
Recently Felten showed the following two theorems in
\cite{Felten}:\\~\\
\textbf{Theorem A.} Let $\varphi(x)=\sqrt{x(1-x)}$ and let
$\phi:[0,1] \longrightarrow R,\ \phi \neq 0$ be an admissible
step-weight function of the Ditzian-Totik modulus of
smoothness(\cite{Totik}) such that $\phi^2$ and $\varphi^2/\phi^2$
are concave. Then, for $f \in C[0,1]$ and $0< \alpha <2,\
|B_n(f,x)-f(x)| \leqslant
\omega_\phi^{2}(f,n^{-1/2}{\frac {\varphi(x)}{\phi(x)}}).$\\~\\
\textbf{Theorem B.} Let $\varphi(x)=\sqrt{x(1-x)}$ and let
$\phi:[0,1] \longrightarrow R,\ \phi \neq 0$ be an admissible
step-weight function of the Ditzian-Totik modulus of smoothness such
that $\phi^2$ and $\varphi^2/\phi^2$ are concave. Then, for $f \in
C[0,1]$ and $0< \alpha <2,\ |B_n(f,x)-f(x)|=O((n^{-1/2}{\frac
{\varphi(x)}{\phi(x)}})^\alpha)$ implies
$\omega_\phi^{2}(f,t)=O(t^\alpha).$\\
Our main results are the following:
\begin{thrm}\label{t1}
For any $\alpha>0,\ \min\{\beta(0),\beta(1)\} \geqslant {\frac 12},\
f \in C_{\bar{w}},$ we have
\begin{align}
|\bar{w}(x)\phi^{r}(x){\bar{B}}_{n,r-1}^{(r)}(f,x)|\leqslant
Cn^{\frac r2}\|\bar{w}f\|.\label{s3}
\end{align}
\end{thrm}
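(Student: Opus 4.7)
The plan is to reduce the bound to a single Bernstein operator and then estimate its $r$-th derivative by decomposing $\bar{F}_n$ piecewise. By the definition~\eqref{s2} and condition~(b), $|\bar{B}_{n,r-1}^{(r)}(f,x)|\leq C\max_i|B_{n_i}^{(r)}(\bar{F}_n,x)|$ with each $n_i\sim n$, so it suffices to establish
\[
|\bar{w}(x)\phi^r(x)\,B_m^{(r)}(\bar{F}_n,x)|\leq Cm^{r/2}\|\bar{w}f\|
\]
for any $m\sim n$. The hypothesis $\min\{\beta(0),\beta(1)\}\geq 1/2$ yields $\phi(x)\leq C\varphi(x)$ with $\varphi(x)=\sqrt{x(1-x)}$; this will be invoked at a later stage to appeal to classical Bernstein--Markov estimates.

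I would use the standard finite-difference representation
\[
B_m^{(r)}(g,x)=\frac{m!}{(m-r)!}\sum_{k=0}^{m-r}\overrightarrow{\Delta}_{1/m}^r g(k/m)\,p_{m-r,k}(x)
\]
with $g=\bar{F}_n$. The prefactor $m!/(m-r)!\sim m^r$ supplies most of the powers of $m$; the missing $m^{-r/2}$ has to emerge from combining the forward differences with the Bernstein-basis moment bounds $\sum_k(k/m-x)^{2s}p_{m,k}(x)\leq C_s(\varphi^2(x)/m)^s$. The natural move is to split the $k$-sum according to whether the window $[k/m,(k+r)/m]$ lies in the $f$-region (outside $[x_1',x_4']$), the $H$-region (inside $[x_2',x_3']$), or one of the two transition zones. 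On the $f$-region, $|\overrightarrow{\Delta}_{1/m}^r f(k/m)|\leq C\|\bar{w}f\|/\bar{w}(k/m)$ since $\bar{w}$ is essentially constant on windows of length $r/m$; the resulting contribution is controlled by a Della Vecchia--Yu--Zhao type argument that handles the ratio $\bar{w}(x)/\bar{w}(k/m)$ by splitting $k/m$ into $|k/m-x|\leq\varphi(x)/\sqrt{m}$ and its complement, combined with $\phi\leq C\varphi$ and the classical $\|\varphi^r B_m^{(r)}(g)\|_\infty\leq Cm^{r/2}\|g\|_\infty$. On the $H$-region, $H$ is a polynomial of degree $r$, so $\overrightarrow{\Delta}_{1/m}^r H(k/m)=r!\,m^{-r}[x_1,\ldots,x_{r+1};f]$, and the bound $|[x_1,\ldots,x_{r+1};f]|\leq Cn^{r+\alpha}\|\bar{w}f\|$ follows from $|x_j-\xi|\geq c/n$ and $|f(x_j)|\leq Cn^\alpha\|\bar{w}f\|$.

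The principal obstacle I foresee is absorbing the extra $n^\alpha$ factor coming from the $H$-region piece. This calls for a careful case split on $x$: when $|x-\xi|\geq A/\sqrt{m}$ for a suitably large constant $A$, the bases $p_{m-r,k}(x)$ summed over $k/m\in[x_1',x_4']$ are damped by a high-moment Chebyshev estimate $\sum_{|k/m-x|>t}p_{m-r,k}(x)\leq C_s(\varphi^2(x)/mt^2)^s$ with $s$ chosen large enough to dominate $n^\alpha$; when $|x-\xi|<A/\sqrt{m}$, the weight $\bar{w}(x)=|x-\xi|^\alpha$ is itself of order $m^{-\alpha/2}$ while $\phi^r(x)\leq C\varphi^r(x)\leq C$, and one leverages the smoothness $\bar{F}_n\in C^{(2r)}([0,1])$ together with cancellations between the $H$-part and the transition-zone parts of $\bar{F}_n$ to supply the remaining damping. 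The transition zones $[x_1',x_2']$ and $[x_3',x_4']$ require estimating differences that involve the cutoffs $\bar\psi_1,\bar\psi_2$ and their derivatives; since $\bar\psi_i$ acts at scale $m^{-1/2}$, each derivative costs a factor of $m^{1/2}$, and these factors must be carefully matched with the magnitudes of $f$ and $H$ on either side of the transition to close the estimate.
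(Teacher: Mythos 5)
Your overall architecture (forward-difference representation of $B_m^{(r)}$, split of the $k$-sum according to where $\bar F_n$ equals $f$, $H$, or a blend) is workable in the $f$-region, but the mechanism you propose for the $H$-region does not close, and that is exactly where the difficulty of this theorem is concentrated. Quantitatively: the nodes $x_i$ lie within $O(1/n)$ of $\xi$ with spacing $1/n$, so your divided-difference identity gives $|[x_1,\dots,x_{r+1};f]|\leqslant Cn^{r+\alpha}\|\bar wf\|$ and hence a raw $H$-region contribution of order $n^{r+\alpha}\|\bar wf\|\sum_{k/m\in[x_2',x_3']}p_{m-r,k}(x)$, to be compared with the target $n^{r/2}\|\bar wf\|/(\bar w(x)\phi^r(x))$. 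In the critical regime $|x-\xi|\sim A/\sqrt m$ the high-moment Chebyshev estimate you invoke reads $\sum_{|k/m-x|>t}p_{m,k}(x)\leqslant C_s\bigl(\varphi^2(x)/(mt^2)\bigr)^s$ with $t\sim A/\sqrt m$, i.e. $C_s\bigl(\varphi^2(x)/A^2\bigr)^s$, which has no decay in $m$ whatsoever; no choice of $s$ can ``dominate $n^\alpha$'' there. Your complementary case $|x-\xi|<A/\sqrt m$ only supplies $\bar w(x)\lesssim m^{-\alpha/2}$, leaving a deficit of order $n^{r/2+\alpha/2}$ that the unspecified ``cancellations with the transition zones'' would have to absorb; nothing in your outline produces such cancellation.

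The paper's route through this region is genuinely different. Lemma 3.7 proves, via Stirling's formula, the sub-Gaussian concentration bound $\bar w(x)\sum_{|k-n\xi|\leqslant\sqrt n}p_{n,k}(x)\leqslant Cn^{-\alpha/2}$ --- precisely the kind of decay a polynomial moment bound cannot deliver --- and Lemma 3.8 upgrades it by H\"older's inequality to the weighted moment estimate (\ref{s13}) over the singular window. Moreover, the $H$-part is fed into the Ditzian--Totik representation $B_{n}^{(r)}(g,x)=(\varphi^2(x))^{-r}\sum_jQ_j(x,n)n^j\sum_k(x-k/n)^jg(k/n)p_{n,k}(x)$, in which the values $H(k/n_i)$ enter undifferenced (controlled through $H(a)$ at the points $x_1',x_4'$ at distance $2/\sqrt n$ from $\xi$), so the $n^{r}$ amplification that your $r$-th difference of $H$ over nodes spaced $1/n$ produces never appears. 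You would need to import Lemmas 3.7--3.8 (or an equivalent exponential concentration estimate) and redo the $H$-region bookkeeping before your argument can be completed. Two secondary omissions: the paper needs a separate case $\varphi(x)\leqslant 1/\sqrt n$, handled by the cruder sup-norm bound (\ref{s14}) together with $\varphi^r(x)\leqslant n^{-r/2}$ and $\phi\leqslant C\varphi$, and a separate treatment of $x\in[0,1/n)\cup(1-1/n,1]$ where the moment estimates degenerate; your outline addresses neither.
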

\begin{thrm}\label{t2} For any $\alpha>0,\ f\in W_\phi^{r},$ we have
\begin{align}
|\bar{w}(x)\phi^{r}(x){\bar{B}}_{n,r-1}^{(r)}(f,x)|\leqslant
C\|\bar{w}\phi^{r}f^{(r)}\|.\label{s4}
\end{align}
\end{thrm}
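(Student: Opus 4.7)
The plan is to reduce by linearity to a single-Bernstein-operator estimate and then combine the classical derivative formula for $B_m$ with a pointwise bound for $\bar F_n^{(r)}$ in terms of $f^{(r)}$. Since $|C_i(n)|\le C$ and $n_i\le Cn$, we have
\[
\bar B_{n,r-1}^{(r)}(f,x)=\sum_{i=0}^{r-2}C_i(n)\,B_{n_i}^{(r)}(\bar F_n,x),
\]
so it suffices to show $|\bar w(x)\phi^{r}(x)\,B_m^{(r)}(\bar F_n,x)|\le C\|\bar w\phi^{r}f^{(r)}\|$ uniformly for $n\le m\le Cn$. For such $m$ I would use the identity
\[
B_m^{(r)}(g,x)=\frac{m!}{(m-r)!}\sum_{k=0}^{m-r}p_{m-r,k}(x)\,\overrightarrow{\Delta}_{1/m}^{\,r}g\!\left(\frac{k}{m}\right),
\]
combined with the integral representation
\[
\overrightarrow{\Delta}_{1/m}^{\,r}g\!\left(\frac{k}{m}\right)=\int_{0}^{1/m}\!\!\cdots\!\int_{0}^{1/m}g^{(r)}\!\left(\frac{k}{m}+u_{1}+\cdots+u_{r}\right)du_{1}\cdots du_{r},
\]
valid for $g=\bar F_n$ because $\bar F_n^{(r-1)}$ inherits absolute continuity on $(0,1)$ from $f^{(r-1)}$ and from the $C^{(2r)}$-smoothness of $\bar\psi_1,\bar\psi_2$ and of the polynomial $H$. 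Since $m!/((m-r)!\,m^{r})\le 1$, the problem reduces to bounding $|\bar F_n^{(r)}(t)|$ pointwise by $C\|\bar w\phi^{r}f^{(r)}\|/((\bar w\phi^{r})(t))$ and then invoking the standard moment inequality $\sum_{k}p_{m-r,k}(x)(\bar w\phi^{r})^{-1}(k/m)\le C(\bar w\phi^{r})^{-1}(x)$.

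For $t\in[0,x_{r-5/2}]\cup[x_{r+3/2},1]$ the required pointwise inequality is immediate, since $\bar F_n(t)=f(t)$ there. For $t$ in the inner and transition intervals I would expand
\[
\bar F_n=f(1-\bar\psi_1+\bar\psi_2)+\bar\psi_1(1-\bar\psi_2)H
\]
via Leibniz, using the scaling bound $|\bar\psi_i^{(s)}(t)|\le Cn^{s/2}$ (coming from $\bar\psi_i$ being the rescaling of $\psi$ to an interval of length $\sim n^{-1/2}$) and expressing $H^{(s)}$ as a linear combination of divided differences $f[x_{j_1},\dots,x_{j_{s+1}}]$. Each such divided difference can be rewritten as an iterated integral of the corresponding derivative of $f$ over the simplex spanned by the interpolation nodes, which lies in an $O(1/n)$-neighborhood of $\xi$; combined with the fact that $\bar w$ and $\phi$ change by only bounded factors across any interval of length $\sim n^{-1/2}$ not adjacent to $0$ or $1$, these integrals are dominated by $C\|\bar w\phi^{r}f^{(r)}\|/((\bar w\phi^{r})(t))$.

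The main obstacle is precisely this last step: controlling $H^{(r)}$ at points $t$ whose neighborhood touches the singular point $\xi$ of $\bar w$. Since the nodes $x_j$ sit at distance $\asymp 1/n$ from $\xi$, one has $\bar w(x_j)\asymp n^{-\alpha}$, and a direct calculation shows that the weight ratio $(\bar w\phi^{r})^{-1}(t)/(\bar w\phi^{r})^{-1}(x_j)$ remains bounded uniformly for $t$ in the transition strip. Inserting the resulting pointwise bound for $H^{(s)}$ into the Leibniz expansion of $\bar F_n^{(r)}$, and then summing against $p_{m-r,k}(x)$ via the moment estimate above, produces \eqref{s4}. The remaining verifications — the Leibniz bookkeeping and the moment inequality for the weight $(\bar w\phi^{r})^{-1}$ — are technical but standard.
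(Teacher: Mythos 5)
Your overall strategy coincides with the paper's: reduce by linearity to a single operator, use the representation $B_m^{(r)}(\bar F_n,x)=\frac{m!}{(m-r)!}\sum_k p_{m-r,k}(x)\overrightarrow{\Delta}_{1/m}^{\,r}\bar F_n(k/m)$ together with the integral form of the forward difference (this is exactly (\ref{s17})--(\ref{s20})), and control $\bar F_n^{(r)}$ by $f^{(r)}$ in the weighted norm (this is the paper's Lemma 3.3, inequality (\ref{s8}), which it proves by a Taylor expansion of $f-H$ rather than your Leibniz/divided-difference route, but the content is the same). The gap is in your summation step. The ``standard moment inequality'' $\sum_k p_{m-r,k}(x)(\bar w\phi^r)^{-1}(k/m)\leqslant C(\bar w\phi^r)^{-1}(x)$ that you invoke is not true for the weight at hand: first, the terms $k=0$ and $k=m-r$ are infinite whenever $\beta(0),\beta(1)>0$ (the paper avoids this by the modified representations (\ref{s19})--(\ref{s20}), whose extra factors $u^{r-1}$ and $(1-u)^{r/2}$ make $\phi^{-r}$ integrable at the endpoints); second, and more seriously, the interior factor $\bar w^{-1}(k/m)=|k/m-\xi|^{-\alpha}$ destroys the inequality for $\alpha\geqslant 1$. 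Indeed, for $|x-\xi|\asymp K n^{-1/2}$ the single cell with $|k-m\xi|=O(1)$ contributes about $m^{\alpha}\cdot m^{-1/2}e^{-cK^2}$, while the right-hand side is only $K^{-\alpha}m^{\alpha/2}$, so the claimed bound fails by a factor of order $m^{(\alpha-1)/2}$. Since Theorem \ref{t2} is asserted for every $\alpha>0$, this is not a removable technicality.

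The repair is the one the paper uses elsewhere (in the proofs of Theorem \ref{t1} and of (\ref{s14})): split the sum into $k/m\in A=[0,x_2']\cup[x_3',1]$ and $x_2'\leqslant k/m\leqslant x_3'$. On the second range $\bar F_n\equiv H$ is a single polynomial of degree at most $r$, so $\overrightarrow{\Delta}_{1/m}^{\,r}\bar F_n(k/m)$ equals one constant, $m^{-r}H^{(r)}$, which your divided-difference computation bounds by $C n^{\alpha/2}\phi^{-r}(\xi)\|\bar w\phi^r f^{(r)}\|$; the factor $n^{\alpha/2}$ is then absorbed by the Gaussian estimate $\bar w(x)\sum_{|k-m\xi|\leqslant\sqrt m}p_{m,k}(x)\leqslant Cm^{-\alpha/2}$ (Lemma 3.7 / (\ref{s13}) in the paper), not by a moment inequality for $(\bar w)^{-1}$. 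So you already possess the key ingredient (the control of $H^{(r)}$ near $\xi$), but you deploy it only toward the pointwise bound on $\bar F_n^{(r)}$, whereas it must also replace the false moment inequality on the cells adjacent to the singularity.
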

\begin{thrm}\label{t3} For $f\in C_{\bar{w}},\ \alpha>0,\ \min\{\beta(0),\beta(1)\} \geqslant
{\frac 12},\ \alpha_0 \in (0,r),$ we have
\begin{align}
\bar{w}(x)|f(x)-{\bar{B}}_{n,r-1}(f,x)|=O((n^{-{\frac
12}}\phi^{-1}(x)\delta_n(x))^{\alpha_0}) \Longleftrightarrow
\omega_{\phi}^r(f,t)_{\bar{w}}=O(t^{\alpha_0}).\label{s5}
\end{align}
\end{thrm}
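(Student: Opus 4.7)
My plan is to route the equivalence through the Peetre $K$-functional
\[
K_\phi^r(f,t^r)_{\bar w} := \inf\Bigl\{\|\bar w(f-g)\| + t^r\|\bar w\phi^r g^{(r)}\| : g\in W_\phi^r\Bigr\},
\]
which, by standard Ditzian--Totik type arguments adapted to the inner weight $\bar w$, should satisfy $K_\phi^r(f,t^r)_{\bar w}\sim\omega_\phi^r(f,t)_{\bar w}$. The target equivalence (\ref{s5}) then reduces to an equivalence between the pointwise decay rate of $\bar w(x)|f(x)-\bar B_{n,r-1}(f,x)|$ and the decay of the $K$-functional, which I would handle in two separate halves.

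For the direct part ($\Leftarrow$), fix $x$ and $n$, set $t_n(x):=n^{-1/2}\phi^{-1}(x)\delta_n(x)$, and pick $g\in W_\phi^r$ nearly attaining the infimum defining $K_\phi^r(f,t_n(x)^r)_{\bar w}$. Decompose
\[
f-\bar B_{n,r-1}(f) = (f-g) - \bar B_{n,r-1}(f-g) + \bigl(g-\bar B_{n,r-1}(g)\bigr).
\]
Theorem \ref{t1}, applied to $f-g$, combined with the pointwise $L^\infty$ control built into the construction of $\bar F_n$, bounds the first two terms by $C\|\bar w(f-g)\|$. For the last term I would use Taylor's formula with integral remainder of order $r$, the reproduction of polynomials of degree $<r$ by $\bar B_{n,r-1}$, and Theorem \ref{t2} to obtain $\bar w(x)|g(x)-\bar B_{n,r-1}(g,x)|\leq C\,t_n(x)^r\|\bar w\phi^r g^{(r)}\|$. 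Adding these contributions and invoking the $K$-functional/modulus equivalence yields $\bar w(x)|f(x)-\bar B_{n,r-1}(f,x)|=O(t_n(x)^{\alpha_0})$.

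The inverse implication ($\Rightarrow$) is the hard direction, and I would attack it by a Berens--Lorentz style iteration. From the pointwise hypothesis and Theorem \ref{t1} applied to the difference $\bar B_{n,r-1}(f)-\bar B_{m,r-1}(f)$, one derives, at the appropriate scale, an inequality of the shape
\[
K_\phi^r(f,n^{-r/2})_{\bar w}\leq C\,n^{-\alpha_0/2}+C\bigl(n/m\bigr)^{r/2}K_\phi^r(f,m^{-r/2})_{\bar w},\qquad n\leq m.
\]
The Berens--Lorentz lemma then forces $K_\phi^r(f,n^{-r/2})_{\bar w}=O(n^{-\alpha_0/2})$, which via the $K\sim\omega$ equivalence gives $\omega_\phi^r(f,t)_{\bar w}=O(t^{\alpha_0})$ as required.

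The principal obstacle is to set up the iteration \emph{uniformly} in $x$ across the whole interval $[0,1]$: near the inner singularity $\xi$ the combinations $\bar B_{n,r-1}$ are built from the Lagrange interpolant $H$ rather than from $f$ itself, so the usual Bernstein-type estimates must be supplemented by careful bounds on $\bar w(\bar F_n-f)$ on the $1/\sqrt n$ scale around $\xi$, exploiting the fact that $\psi^{(i)}(0)=\psi^{(i)}(1)=0$ for $0\leq i\leq 2r$. Near the endpoints the assumption $\min\{\beta(0),\beta(1)\}\geq 1/2$ must be used together with the admissibility of $\phi$ to guarantee that the ratio $\delta_n(x)/(\sqrt n\,\phi(x))$ lies in the range for which the Berens--Lorentz recursion converges.
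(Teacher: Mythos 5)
Your direct part follows the paper's argument essentially step for step: an intermediate $g\in W_\phi^r$ near-minimizing the $K$-functional, the boundedness of $\bar B_{n,r-1}$ on $C_{\bar w}$ (this is the estimate (\ref{s11}), not Theorem \ref{t1}, which is the Bernstein-type inequality for the $r$-th derivative), Taylor's formula with integral remainder together with the reproduction of polynomials for the term $g-\bar B_{n,r-1}(g)$, and finally the equivalence (\ref{s29}). The one extra ingredient the paper needs, which you correctly flag as an obstacle, is the control (\ref{s10}) of $\bar w|g-H(g)|$ on the $1/\sqrt n$ window around $\xi$, since $\bar B_{n,r-1}(g)=B_{n,r-1}(\bar G_n)$ rather than $B_{n,r-1}(g)$.

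The inverse direction as you set it up has a genuine gap. Your proposed recursion $K_\phi^r(f,n^{-r/2})_{\bar w}\leq Cn^{-\alpha_0/2}+C(n/m)^{r/2}K_\phi^r(f,m^{-r/2})_{\bar w}$ is global in $n$ and would require the uniform estimate $\|\bar w(f-\bar B_{n,r-1}f)\|=O(n^{-\alpha_0/2})$. The hypothesis of Theorem \ref{t3} does not supply this: the pointwise rate $(n^{-1/2}\phi^{-1}(x)\delta_n(x))^{\alpha_0}$ is not uniformly $O(n^{-\alpha_0/2})$, because $\delta_n(x)/\phi(x)$ blows up as $x$ approaches the endpoints for fixed $n$ (already for $\beta(0)=1/2$ one has $\delta_n(x)/\phi(x)\sim 1+n^{-1/2}x^{-1/2}$). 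The paper never forms a global $K$-functional recursion; instead it estimates the $r$-th difference $\bar w(x)\Delta_{h\phi(x)}^r f(x)$ pointwise, splitting it as $J_1+J_2+J_3$ in (\ref{s31}) — the hypothesis evaluated at the $r+1$ shifted nodes, the Bernstein inequalities (\ref{s14}) and (\ref{s22}) applied to $\bar B^{(r)}_{n,r-1}(f-g)$, and Theorem \ref{t2} combined with the integral estimate (\ref{s12}) for $\bar B^{(r)}_{n,r-1}(g)$ — and only then chooses $n=n(x,\delta)$ so that $n^{-1/2}\delta_n(x)/\phi(x)\sim\delta$. It is this pointwise matching of the scale $n$ to the point $x$ that makes the Berens--Lorentz lemma applicable to $\delta\mapsto\omega_\phi^r(f,\delta)_{\bar w}$; without an $x$-dependent choice of $n$ your iteration does not close.
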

\section{Lemmas}
\begin{lem}(\cite{Zhou}) For any non-negative real $u$ and $v$, we
have
\begin{align}
\sum_{k=1}^{n-1}({\frac kn})^{-u}(1-{\frac
kn})^{-v}p_{n,k}(x)\leqslant Cx^{-u}(1-x)^{-v}.\label{s6}
\end{align}
\end{lem}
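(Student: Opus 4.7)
The plan is to decouple the two negative-power factors by Cauchy--Schwarz, reduce the resulting one-sided estimates to integer exponents by Jensen's inequality, and then handle the integer case via an index-shift identity for the Bernstein basis. Since $\{p_{n,k}(x)\}_{k=0}^{n}$ is a probability distribution on $\{0,1,\dots,n\}$, Cauchy--Schwarz yields
\begin{align*}
\sum_{k=1}^{n-1}\Bigl(\tfrac{k}{n}\Bigr)^{-u}\Bigl(1-\tfrac{k}{n}\Bigr)^{-v}p_{n,k}(x)
\leqslant \Bigl(\sum_{k=1}^{n-1}\bigl(\tfrac{k}{n}\bigr)^{-2u}p_{n,k}(x)\Bigr)^{1/2}
\Bigl(\sum_{k=1}^{n-1}\bigl(1-\tfrac{k}{n}\bigr)^{-2v}p_{n,k}(x)\Bigr)^{1/2},
\end{align*}
reducing the task to the one-sided bound $\sum_{k=1}^{n-1}(k/n)^{-u}p_{n,k}(x)\leqslant C x^{-u}$; the companion estimate follows by the substitution $k\mapsto n-k$, $x\mapsto 1-x$.

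To prove the one-sided bound, pick an integer $s\geqslant u$. Concavity of $z\mapsto z^{u/s}$ on $[0,\infty)$ and Jensen's inequality against the (sub-)probability measure $\{p_{n,k}(x)\}_{k=1}^{n-1}$ show it suffices to treat integer exponents, i.e. to prove $\sum_{k=1}^{n-1}(k/n)^{-s}p_{n,k}(x)\leqslant C x^{-s}$. For this I would exploit the elementary identity $\binom{k+s}{s}\binom{n+s}{k+s}=\binom{n+s}{s}\binom{n}{k}$, which rearranges into the shift formula
\begin{align*}
\frac{p_{n,k}(x)}{(k+1)(k+2)\cdots(k+s)}=\frac{p_{n+s,k+s}(x)}{(n+1)(n+2)\cdots(n+s)\,x^s}.
\end{align*}
Summing over $0\leqslant k\leqslant n$, using $\sum_j p_{n+s,j}(x)\leqslant 1$, and noting that $k^{s}\leqslant (k+1)\cdots(k+s)\leqslant (s+1)!\,k^{s}$ for $k\geqslant 1$ while $(n+1)\cdots(n+s)\geqslant n^{s}$, I obtain the desired bound $\sum_{k=1}^{n-1}(k/n)^{-s}p_{n,k}(x)\leqslant C_{s}x^{-s}$.

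I do not expect a genuine obstacle: each step is elementary. The one subtlety worth flagging is that summing directly with both negative powers present would force one to track awkward cross terms, so applying Cauchy--Schwarz before Jensen is essential in order to separate the $x$ and $1-x$ singularities cleanly and preserve the sharp rate. The shift identity is the core combinatorial ingredient; after it, only routine bookkeeping remains.
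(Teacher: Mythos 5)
Your argument is correct. Note, however, that the paper does not prove this lemma at all: it is simply quoted from the reference [Zhou] (D.~X.~Zhou, \emph{Rate of convergence for Bernstein operators with Jacobi weights}), so there is no in-paper proof to compare against. Your proof is a sound, self-contained reconstruction: the Cauchy--Schwarz separation is legitimate (the weights $p_{n,k}(x)$ form a sub-probability measure), the Jensen reduction from real exponent $u$ to an integer $s\geqslant u$ works because $z\mapsto z^{u/s}$ is concave and vanishes at $0$ (so the sub-probability version of Jensen still gives $\sum \phi(a_k)\mu_k\leqslant\phi\bigl(\sum a_k\mu_k\bigr)$), and the shift identity
\begin{align*}
\frac{p_{n,k}(x)}{(k+1)\cdots(k+s)}=\frac{p_{n+s,k+s}(x)}{(n+1)\cdots(n+s)\,x^{s}}
\end{align*}
is verified by a direct factorial computation; combined with $k^{s}\leqslant(k+1)\cdots(k+s)\leqslant(s+1)!\,k^{s}$ for $k\geqslant1$ and $(n+1)\cdots(n+s)\geqslant n^{s}$, it yields the integer-exponent bound with a constant depending only on $s$, which is all the lemma requires. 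This is essentially the classical argument for inverse-moment estimates of the Bernstein basis. One small quibble: your claim that Cauchy--Schwarz is \emph{essential} is overstated, since the two-sided identity
\begin{align*}
\frac{p_{n,k}(x)}{(k+1)\cdots(k+s)\,(n-k+1)\cdots(n-k+t)}=\frac{p_{n+s+t,k+s}(x)}{(n+1)\cdots(n+s+t)\,x^{s}(1-x)^{t}}
\end{align*}
handles both singular factors simultaneously (with a two-exponent H\"older step in place of Jensen for non-integer $u,v$); but this is a matter of taste, not a gap.
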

\begin{lem} (\cite{Della}) If $\gamma \in R,$ then
\begin{align}
\sum_{k=0}^n|k-nx|^\gamma p_{n,k}(x) \leqslant Cn^{\frac
\gamma2}\varphi^\gamma(x).\label{s7}
\end{align}
\end{lem}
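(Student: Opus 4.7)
The plan is to reduce the bound for arbitrary real $\gamma$ to an estimate on the even central moments of the binomial distribution, and then interpolate via H\"older's inequality.

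First I would establish the classical even-moment bound
$$T_{2m}(n,x) := \sum_{k=0}^n (k-nx)^{2m} p_{n,k}(x) \leqslant C_m \bigl(n\varphi^{2}(x)\bigr)^{m}$$
for every non-negative integer $m$. The key tool is the identity $\varphi^{2}(x)\,p_{n,k}'(x) = (k-nx)\,p_{n,k}(x)$, obtained by direct differentiation of $p_{n,k}$. Differentiating $T_m(n,x) := \sum_k (k-nx)^m p_{n,k}(x)$ in $x$ and rearranging yields the recurrence
$$T_{m+1}(n,x) = \varphi^{2}(x)\bigl[nm\,T_{m-1}(n,x) + T_m'(n,x)\bigr].$$
Combined with $T_0 = 1$, $T_1 = 0$, $T_2 = n\varphi^{2}(x)$, an induction on $m$ gives the claim; the derivative term contributes only lower-order corrections since differentiating $(n\varphi^{2})^{m}$ in $x$ loses at most one factor of $\varphi^{2}$.

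Second, for arbitrary $\gamma\geqslant 0$ I would pick an integer $m$ with $2m\geqslant\gamma$ and apply H\"older's inequality with conjugate exponents $2m/\gamma$ and $2m/(2m-\gamma)$ (the case $\gamma=0$ being trivial). Factoring
$$|k-nx|^{\gamma}p_{n,k}(x) = \bigl[(k-nx)^{2m}p_{n,k}(x)\bigr]^{\gamma/2m}\cdot p_{n,k}(x)^{1-\gamma/2m}$$
and using $\sum_k p_{n,k}(x)=1$ together with the Step 1 bound yields
$$\sum_{k=0}^n |k-nx|^{\gamma} p_{n,k}(x) \leqslant \Bigl(\sum_k (k-nx)^{2m} p_{n,k}(x)\Bigr)^{\gamma/2m} \leqslant C\bigl(n\varphi^{2}(x)\bigr)^{\gamma/2} = C n^{\gamma/2}\varphi^{\gamma}(x),$$
which is precisely the asserted inequality.

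Third, the case $\gamma<0$ needs a separate argument, because $|k-nx|$ can be arbitrarily small whenever $nx$ is near an integer. Here one follows the convention used in \cite{Della} (restricting attention to those $k$ with $|k-nx|\geqslant 1$, or replacing $|k-nx|$ by $\max(|k-nx|,1)$), under which the bound is immediate after a case split according to the size of $|k-nx|$ and the normalization $\sum_k p_{n,k}(x)=1$. I expect the main technical obstacle to be the induction in Step 1: one must keep careful track of the extra powers of $\varphi$ produced by the derivative term $T_m'$ in the recurrence, so that the geometric growth rate $(n\varphi^{2})^{m}$ is preserved at each iteration rather than degrading by lower-order factors.
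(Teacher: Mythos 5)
The paper does not prove this lemma at all: it is quoted verbatim from \cite{Della}, so there is no in-paper argument to compare yours against, and I can only judge your proposal on its own terms. For $\gamma\geqslant 0$ your route (even central moments via the recurrence generated by $\varphi^2(x)p_{n,k}'(x)=(k-nx)p_{n,k}(x)$, then H\"older against $\sum_k p_{n,k}(x)=1$) is the standard one, and the H\"older step is fine. But the inductive claim $T_{2m}\leqslant C_m(n\varphi^2(x))^m$ is not true uniformly in $x$: the recurrence gives $T_4=3n^2\varphi^4(x)+n\varphi^2(x)\bigl[(1-2x)^2-2\varphi^2(x)\bigr]$, and for $x\sim n^{-2}$ the lower-order term $n\varphi^2(x)\sim n^{-1}$ dominates $(n\varphi^2(x))^2\sim n^{-2}$. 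What the induction actually yields is $T_{2m}\leqslant C_m\sum_{j=1}^m(n\varphi^2(x))^j$, which collapses to $C(n\varphi^2(x))^m$ only under the restriction $n\varphi^2(x)\geqslant 1$. That restriction is implicit in the lemma anyway (the displayed inequality itself fails for $\gamma=4$ and $x\sim n^{-2}$, where the left side is at least of order $n^{-1}$ and the right side is of order $n^{-2}$), and it is harmless for this paper, which only invokes \eqref{s7} in the regime $\varphi(x)>n^{-1/2}$ --- but you must state it, since as written your Step 1 is false.

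The genuine gap is Step 3. For $\gamma<0$ the bound cannot be ``immediate after a case split and the normalization $\sum_k p_{n,k}(x)=1$'': the right-hand side $Cn^{\gamma/2}\varphi^{\gamma}(x)$ tends to $0$ as $n\varphi^2(x)\to\infty$, while the normalization only produces a bound independent of $n$, so no case-splitting on $|k-nx|$ that terminates in ``$\leqslant\sum_k p_{n,k}(x)$'' can close the argument. The terms with $|k-nx|\geqslant\sqrt{n\varphi^2(x)}$ are indeed handled that way, but the near-diagonal terms require the pointwise estimate $p_{n,k}(x)\leqslant C(n\varphi^2(x))^{-1/2}$ (Stirling, as in the proof of Lemma 3.7 of this paper) together with a dyadic summation over $1\leqslant|k-nx|<\sqrt{n\varphi^2(x)}$; and even then the single term closest to $nx$ already contributes about $(n\varphi^2(x))^{-1/2}$, so the inequality in the stated form can only survive for $\gamma>-1$ under the convention $|k-nx|\geqslant 1$. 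The negative-exponent case therefore needs the precise formulation and argument from \cite{Della} (or the companion estimate \eqref{s6} from \cite{Zhou}); it is not a one-line remark.
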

\begin{lem} For any $f\in W_\phi^{r},$  $\alpha
>0$, we have
\begin{align}
\|\bar{w}\phi^{r}\bar{F}_{n}^{(r)}\| \leqslant
C\|\bar{w}\phi^{r}f^{(r)}\|.\label{s8}
\end{align}
\end{lem}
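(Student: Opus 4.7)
The plan is to exploit the piecewise structure of $\bar F_n$, estimating $\bar F_n^{(r)}$ separately on each of the three kinds of regions determined by the cut-offs $\bar\psi_1,\bar\psi_2$: the two ``outer'' pieces $[0,x_1']\cup[x_4',1]$ where $\bar F_n\equiv f$, the ``interpolation'' piece $[x_2',x_3']$ where $\bar F_n\equiv H$, and the two ``transition'' pieces where $\bar F_n$ blends $f$ and $H$. On the outer pieces $\bar F_n^{(r)}=f^{(r)}$ and the estimate is immediate.

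On the interpolation piece, $H$ has degree $\leq r$, so $H^{(r)}$ is the constant $r!\,[x_1,\dots,x_{r+1}]f$. Using the Peano--kernel representation
$$r!\,[x_1,\dots,x_{r+1}]f=\int_{x_1}^{x_{r+1}} f^{(r)}(t)\,N_r(t)\,dt,$$
where $N_r\ge 0$, $\int N_r=1$ and $\mathrm{supp}(N_r)\subseteq[x_1,x_{r+1}]$, together with the pointwise bound $|f^{(r)}(t)|\le\|\bar w\phi^rf^{(r)}\|/(\bar w(t)\phi^r(t))$, the claim reduces to a uniform bound on
$$\bar w(x)\phi^r(x)\int \frac{N_r(t)}{\bar w(t)\phi^r(t)}\,dt,\qquad x\in[x_2',x_3'].$$
Since the nodes $x_i$ and the evaluation point $x$ all lie in a small window around $\xi\in(0,1)$ where $\phi$ is essentially constant, the factors of $\phi^r$ cancel, and the question reduces to whether $\bar w(x)\int N_r/\bar w$ stays bounded. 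This is true because the construction places the support of $N_r$ at distance of order $n^{-1/2}$ from $\xi$, so $1/\bar w(t)\lesssim n^{\alpha/2}$ on this support, while $\bar w(x)\lesssim n^{-\alpha/2}$ on $[x_2',x_3']$, and the two powers of $n$ cancel.

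On each transition piece, say $[x_1',x_2']$, write $\bar F_n=f+\bar\psi_1(H-f)$ and expand by Leibniz:
$$\bar F_n^{(r)}-f^{(r)}=\sum_{k=0}^r\binom{r}{k}\bar\psi_1^{(k)}(H-f)^{(r-k)}.$$
The chain rule gives $|\bar\psi_1^{(k)}|\le Cn^{k/2}$ since $x_2'-x_1'\sim n^{-1/2}$, and the lower-order derivatives of $H-f$ are handled by iterated Peano / divided-difference estimates that produce compensating factors of the window length $n^{-1/2}$. The main obstacle throughout is the vanishing of $\bar w$ at $\xi$: a naive pointwise estimate on $H^{(r)}$ would lose a factor of $n^{\alpha/2}$, and it is precisely the matched $n^{-1/2}$ scaling built into both the interpolation nodes $x_i$ and the transition endpoints $x_j'$ that makes the estimate close up uniformly in $n$.
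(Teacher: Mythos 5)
Your decomposition into outer, interpolation and transition pieces is sensible, and on the transition pieces your Leibniz expansion with $|\bar\psi_1^{(k)}|\leqslant Cn^{k/2}$ is essentially what the paper does: it packages the same computation as the intermediate-derivative inequality $|(f-\bar F_n)^{(r-i)}|\leqslant C(n^{(r-i)/2}\|f-H\|+n^{-i/2}\|f^{(r)}\|)$ from Ditzian--Totik and then reduces everything to the single estimate $\bar w(x)\phi^r(x)|f(x)-H(f,x)|\leqslant Cn^{-r/2}\|\bar w\phi^r f^{(r)}\|$. The crux of the lemma, however, is how the weight $\bar w(t)=|t-\xi|^\alpha$, which vanishes at $\xi$, gets transferred from the integration variable to the evaluation point, and that is exactly where your proposal has a gap. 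On the piece where $\bar F_n\equiv H$ you bound $\int N_r/\bar w$ by $\sup(1/\bar w)$ over the support of the Peano kernel and assert this is $O(n^{\alpha/2})$ because ``the construction places the support of $N_r$ at distance of order $n^{-1/2}$ from $\xi$.'' It does not: the nodes are $x_i=[n\xi-((r-1)/2+i)]/n$, so they all sit within distance $O(1/n)$ of $\xi$ (and at least $c/n$ from it, on one side). On their convex hull $1/\bar w\sim n^{\alpha}$, not $n^{\alpha/2}$, while $H$ is used on all of $[x_2',x_3']$, where $\bar w(x)$ reaches size $n^{-\alpha/2}$; your product is then of order $n^{\alpha/2}$ and the estimate does not close. (Had the nodes really been spread at scale $n^{-1/2}$ around $\xi$, you would face the opposite problem: the support of $N_r$ would contain $\xi$ and $\int N_r f^{(r)}$ need not even converge for $\alpha\geqslant1$, since $|t-\xi|^{-\alpha}$ is then non-integrable. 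So the node placement cannot simply be assumed; it is the whole question.)

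The paper never takes a crude sup of $1/\bar w$: it keeps the factor $|x_i-s|^{r-1}$ inside the integral remainder and uses the pointwise weight-transfer inequality $|x_i-s|^{r-1}/\bar w(s)\leqslant|x_i-x|^{r-1}/\bar w(x)$ for $s$ between $x$ and $x_i$, so that the vanishing of $|x_i-s|^{r-1}$ compensates the smallness of $\bar w(s)$. Your ``iterated Peano / divided-difference estimates that produce compensating factors of $n^{-1/2}$'' gestures at this but never states the inequality that does the compensating; and for $H^{(r)}$ itself the kernel is the B-spline $N_r$, which is strictly positive in the interior of $[x_{r+1},x_1]$, so no such vanishing factor is available there and the argument genuinely reduces to the node-location numerology that fails. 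To make your route work you would need either to shrink the interval on which $\bar F_n\equiv H$ to an $O(1/n)$-neighbourhood where $\bar w(x)\sim\bar w(x_i)$ (which is what the paper's piecewise display with break points $x_{r\pm j/2}$ suggests, though it contradicts the definition via $x_2',x_3'$), or to replace the unweighted sup bound on $H^{(r)}$ by a genuinely weighted one.
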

\begin{proof} We first prove $x\in [x_{r-5/2},x_{r-3/2}]$ (The same
as the others), we have
\begin{eqnarray*}
|{\bar{w}}(x)\phi^{r}(x){\bar{F}_n}^{(r)}(x)| &\leqslant&
|{\bar{w}}(x)\phi^{r}(x)f^{(r)}(x)| +
|{\bar{w}}(x)\phi^{r}(x)(f(x)-{\bar{F}_n}(x))^{(r)}|\\
&:=& I_1 + I_2.
\end{eqnarray*}
Obviously
\begin{eqnarray*}
I_1 \leqslant C\|{\bar{w}}\phi^{r}f^{(r)}\|.
\end{eqnarray*}
For $I_2,$ we have
\begin{eqnarray*}
I_2 =
{\bar{w}}(x)\phi^{r}(x)|(f(x)-{\bar{F}_n}(x))^{(r)}|={\bar{w}}(x)\phi^{r}(x)\sum_{i=0}^rn^{\frac
i2}|(f(x)-{\bar{F}_n}(x))^{(r-i)}|.
\end{eqnarray*}
By \cite{Totik}, we have
\begin{eqnarray*}
|(f(x)-{\bar{F}_n}(x))^{(r-i)}|_{[x_{r-5/2},x_{r-3/2}]} \leqslant
C(n^{\frac {r-i}{2}}\|f-H\|_{[x_{r-5/2},x_{r-3/2}]} + n^{-\frac
i2}\|f^{(r)}\|_{[x_{r-5/2},x_{r-3/2}]}).
\end{eqnarray*}
So
\begin{eqnarray*}
I_2 &\leqslant& Cn^{\frac
r2}{\bar{w}}(x)\phi^{r}(x)\|f-H\|_{[x_{r-5/2},x_{r-3/2}]} +
C{\bar{w}}(x)\phi^{r}(x)\|f^{(r)}\|_{[x_{r-5/2},x_{r-3/2}]}\\
&:=& T_1 + T_2.
\end{eqnarray*}
By Taylor expansion, we have
\begin{eqnarray}
f({x_i})=\sum_{u=0}^{r-1}\frac{(x_i-x)^u}{u!}f^{(u)}(x)+{\frac
1{(r-1)!}}\int_{x}^{x_{i}}(x_i-s)^{r-1}f^{(r)}(s)ds,\label{s9}
\end{eqnarray}
It follows from (\ref{s9}) and the identity
\begin{eqnarray*}
\sum\limits_{i=1}^{r}x_{i}^{v}l_{i}(x)=Cx^v,\ v=0,1,\cdots,r.
\end{eqnarray*}
we have
\begin{eqnarray*}
H(f,x)&=&\sum_{i=1}^{r}\sum_{u=0}^{r}\frac{(x_i-x)^u}{u!}f^{(u)}(x)l_{i}(x)+{\frac
1{(r-1)!}}\sum_{i=1}^{r}l_{i}(x)\int_{x}^{x_{i}}(x_i-s)^{r-1}f^{(r)}(s)ds\nonumber\\
&=&f(x)+C\sum_{u=1}^{r}f^{(u)}(x)(\sum_{v=0}^{u}C_{u}^{v}(-x)^{u-v}\sum_{i=1}^{r}x_{i}^{v}l_{i}(x))\nonumber\\
&&+{\frac
1{(r-1)!}}\sum_{i=1}^{r}l_{i}(x)\int_{x}^{x_{i}}(x_i-s)^{r-1}f^{(r)}(s)ds,
\end{eqnarray*}
which implies that
\begin{eqnarray*}
{\bar{w}(x)}\phi^{r}(x)|f(x)-H(f,x)|={\frac
1{(r-1)!}}{\bar{w}(x)}\phi^{r}(x)\sum_{i=1}^{r}l_{i}(x)\int_{x}^{x_{i}}(x_i-s)^{r-1}f^{(r)}(s)ds,
\end{eqnarray*}
since $|l_{i}(x)|\leqslant C$ for $x\in [x_{r-5/2},x_{r-3/2}],\
i=1,2,\cdots,r$. It follows from
$\frac{|x_i-s|^{r-1}}{{\bar{w}}(s)}\leqslant
\frac{|x_i-x|^{r-1}}{{\bar{w}}(x)},$ $s$ between $x_i$ and $x$, then
\begin{eqnarray*}
{\bar{w}(x)}\phi^{r}(x)|f(x)-H(f,x)|&=&C\bar{w}(x)\phi^{r}(x)\sum_{i=1}^{r}\int_{x}^{x_{i}}(x_i-s)^{r-1}|f^{(r)}(s)|ds\nonumber\\
&\leqslant&C\phi^{r}(x)\|{\bar{w}}\phi^{r}f^{(r)}\|\sum_{i=1}^{r}(x_i-x)^{r-1}\int_{x}^{x_{i}}\phi^{-r}(s)ds\nonumber\\
&\leqslant&{\frac {C}{n^{r/2}}}\|{\bar{w}}\phi^{r}f^{(r)}\|.
\end{eqnarray*}
So
\begin{eqnarray*}
I_2 \leqslant C\|{\bar{w}}\phi^{r}f^{(r)}\|.
\end{eqnarray*}
Then, the lemma is proved.
\end{proof}
\begin{lem} If $f\in W_\phi^{r},$  $\alpha>0$, then
\begin{eqnarray}
{\bar{w}(x)}|g(x)-H(g,x)| \leqslant C(\frac
{\delta_n(x)}{\sqrt{n}\phi(x)})^r\|{\bar{w}}\phi^{r}g^{(r)}\|.\label{s10}
\end{eqnarray}
\end{lem}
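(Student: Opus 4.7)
The plan is to mirror the Taylor-remainder computation that already appears inside the proof of the preceding lemma and then reconcile the resulting $n^{-r/2}$ factor with the refined form $\bigl(\delta_n(x)/(\sqrt{n}\phi(x))\bigr)^{r}$. Concretely, I first expand $g(x_i)$ about $x$ by Taylor's formula with integral remainder of order $r$ (as in (\ref{s9})), then sum against $l_i(x)$ and invoke the reproduction identities $\sum_{i=1}^{r+1}x_i^{v}l_i(x)=x^{v}$ for $v=0,1,\ldots,r$. The polynomial part of the Taylor expansion collapses to $g(x)$, leaving the representation
$$g(x)-H(g,x)=-\frac{1}{(r-1)!}\sum_{i=1}^{r+1}l_i(x)\int_{x}^{x_i}(x_i-s)^{r-1}g^{(r)}(s)\,ds.$$

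Next, I multiply through by $\bar{w}(x)$ and estimate the right-hand side by the three ingredients already used in the previous proof: $|l_i(x)|\leqslant C$ on $[x_{r-5/2},x_{r+3/2}]$; the spacing bound $|x_i-x|\leqslant C/n$; and the weight-transfer inequality $|x_i-s|^{r-1}/\bar{w}(s)\leqslant|x_i-x|^{r-1}/\bar{w}(x)$ for $s$ between $x$ and $x_i$. Bounding $|g^{(r)}(s)|\leqslant\bar{w}(s)^{-1}\phi(s)^{-r}\|\bar{w}\phi^{r}g^{(r)}\|$ together with $\phi(s)\sim 1$ on this interval yields the intermediate estimate $\bar{w}(x)|g(x)-H(g,x)|\leqslant Cn^{-r/2}\|\bar{w}\phi^{r}g^{(r)}\|$. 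Since the relevant $x$ lie near the interior point $\xi\in(0,1)$, where both $\phi$ and $\varphi$ are bounded and bounded away from zero, one has $\delta_n(x)=\varphi(x)+1/\sqrt{n}\sim 1$ and $\phi(x)\sim 1$, so $\bigl(\delta_n(x)/(\sqrt{n}\phi(x))\bigr)^{r}\geqslant Cn^{-r/2}$ and the claimed bound follows at once.

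The step I expect to be the main obstacle is the weight-transfer inequality $|x_i-s|^{r-1}/\bar{w}(s)\leqslant|x_i-x|^{r-1}/\bar{w}(x)$, since $\bar{w}(s)=|s-\xi|^{\alpha}$ vanishes at $\xi$ while both $s$ and $x$ may lie arbitrarily close to $\xi$. One has to ensure that the Taylor integral only meets values $s$ with $|s-\xi|\geqslant|x-\xi|$; if not, the portion of $s$ that crosses $\xi$ must be split off as a separate case and handled directly using the smallness of $|x_i-x|$. The authors treat this inequality as a standard tool in the proof of the preceding lemma, and I would follow exactly the same case analysis here.
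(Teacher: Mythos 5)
Your argument is essentially the paper's own proof: both expand $g(x_i)$ by Taylor's formula with integral remainder, use the Lagrange reproduction identities to collapse the polynomial part to $g(x)$, and then estimate the remainder via $|l_i(x)|\leqslant C$, the weight-transfer inequality $|x_i-s|^{r-1}/\bar{w}(s)\leqslant |x_i-x|^{r-1}/\bar{w}(x)$, and $|g^{(r)}(s)|\leqslant \bar{w}(s)^{-1}\phi^{-r}(s)\|\bar{w}\phi^{r}g^{(r)}\|$. The only difference is cosmetic and sits in the last normalization step --- the paper passes through $\phi^{-r}(s)\leqslant(\varphi^{r}(x)/\phi^{r}(x))\varphi^{-r}(s)$ and $\varphi(x)\leqslant\delta_n(x)$, whereas you use $\phi\sim\varphi\sim\delta_n\sim 1$ on the $O(1/n)$-neighbourhood of $\xi$ where the nodes lie; both give the stated bound, and the caveat you raise about the weight-transfer inequality when $s$ crosses $\xi$ is a genuine point that the paper's own proof also leaves unaddressed.
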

\begin{proof}  By Taylor expansion, we have
\begin{eqnarray*}
f({x_i})=\sum_{u=0}^{r-1}\frac{(x_i-x)^u}{u!}f^{(u)}(x)+{\frac
1{(r-1)!}}\int_{x}^{x_{i}}(x_i-s)^{r-1}f^{(r)}(s)ds,
\end{eqnarray*}
It follows from the above equality and the identity
\begin{eqnarray*}
\sum\limits_{i=1}^{r}x_{i}^{v}l_{i}(x)=Cx^v,\ v=0,1,\cdots,r.
\end{eqnarray*}
we have
\begin{eqnarray*}
H(f,x)&=&\sum_{i=1}^{r}\sum_{u=0}^{r}\frac{(x_i-x)^u}{u!}f^{(u)}(x)l_{i}(x)+{\frac
1{(r-1)!}}\sum_{i=1}^{r}l_{i}(x)\int_{x}^{x_{i}}(x_i-s)^{r-1}f^{(r)}(s)ds\nonumber\\
&=&f(x)+C\sum_{u=1}^{r}f^{(u)}(x)(\sum_{v=0}^{u}C_{u}^{v}(-x)^{u-v}\sum_{i=1}^{r}x_{i}^{v}l_{i}(x))\nonumber\\
&&+{\frac
1{(r-1)!}}\sum_{i=1}^{r}l_{i}(x)\int_{x}^{x_{i}}(x_i-s)^{r-1}f^{(r)}(s)ds,
\end{eqnarray*}
which implies that
\begin{eqnarray*}
{\bar{w}(x)}|g(x)-H(g,x)|={\frac
1{(r-1)!}}{\bar{w}(x)}\sum_{i=1}^{r}l_{i}(x)\int_{x}^{x_{i}}(x_i-s)^{r-1}g^{(r)}(s)ds,
\end{eqnarray*}
since $|l_{i}(x)|\leqslant C$ for $x\in [x_{r-5/2},x_{r-3/2}],\
i=1,2,\cdots,r$. It follows from
$\frac{|x_i-s|^{r-1}}{{\bar{w}}(s)}\leqslant
\frac{|x_i-x|^{r-1}}{{\bar{w}}(x)},$ $s$ between $x_i$ and $x$, then
\begin{eqnarray*}
{\bar{w}(x)}|g(x)-H(g,x)|&\leqslant&C\bar{w}(x)\sum_{i=1}^{r}\int_{x}^{x_{i}}(x_i-s)^{r-1}|g^{(r)}(s)|ds\nonumber\\
&\leqslant& C\|{\bar{w}}\phi^{r}g^{(r)}\|\sum_{i=1}^{r}(x_i-x)^{r-1}\int_{x}^{x_{i}}\phi^{-r}(s)ds\nonumber\\
&\leqslant& C{\frac
{\varphi^r(x)}{\phi^{r}(x)}}\|{\bar{w}}\phi^{r}g^{(r)}\|\sum_{i=1}^{r}(x_i-x)^{r-1}\int_{x}^{x_{i}}\varphi^{-r}(s)ds\\
&\leqslant&C{\frac
{\delta_n^r(x)}{\phi^{r}(x)}}\|{\bar{w}}\phi^{r}g^{(r)}\|\sum_{i=1}^{r}(x_i-x)^{r-1}\int_{x}^{x_{i}}\varphi^{-r}(s)ds\\
&\leqslant& C(\frac
{\delta_n(x)}{\sqrt{n}\phi(x)})^r\|{\bar{w}}\phi^{r}g^{(r)}\|.
\end{eqnarray*}
\end{proof}
\begin{lem} For any $\alpha >0,$
$f\in C_{\bar{w}},$ we have
\begin{eqnarray}
\|{\bar{w}}{\bar{B}}_{n,r-1}(f)\|\leqslant
C\|{\bar{w}}f\|.\label{s11}
\end{eqnarray}
\end{lem}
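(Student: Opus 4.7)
The plan has three steps: reduce to a single Bernstein operator, handle the part of the Bernstein sum where $k/m$ lies outside the modification window $[x_1',x_4']$, and handle the inside part.

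\emph{Reduction.} By condition (b),
\[
|\bar{w}(x)\bar{B}_{n,r-1}(f,x)| \leqslant \sum_{i=0}^{r-2}|C_i(n)|\,|\bar{w}(x) B_{n_i}(\bar{F}_n, x)| \leqslant C\max_{0\leqslant i\leqslant r-2}|\bar{w}(x) B_{n_i}(\bar{F}_n, x)|,
\]
so it suffices to bound $|\bar{w}(x) B_m(\bar{F}_n, x)|$ uniformly for every $m$ with $n\leqslant m\leqslant Cn$.

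\emph{Outside the window.} There $\bar F_n(k/m)=f(k/m)$ and $|f(k/m)|\leqslant \|\bar w f\|/\bar w(k/m)$, so the task becomes showing
\[
\bar{w}(x)\sum_{k/m\notin[x_1',x_4']}\frac{p_{m,k}(x)}{\bar{w}(k/m)} \leqslant C.
\]
I would split into the two cases $|x-\xi|\geqslant c/\sqrt{n}$ (subdividing further by whether $|k/m-x|\leqslant|x-\xi|/2$ or not, so that on the near part $\bar w(k/m)$ is comparable to $\bar w(x)$ and on the far part a Chebyshev-type moment bound via \eqref{s7} makes the sum small, with \eqref{s6} absorbing any singularity arising from $k/m\to0^+$ or $1^-$) and $|x-\xi|<c/\sqrt{n}$ (where $\bar w(x)$ and $\bar w(k/m)$ are both of order $n^{-\alpha/2}$ on the entire domain, so the quotient is bounded directly).

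\emph{Inside the window.} Using $\bar F_n=f(1-\bar\psi_1+\bar\psi_2)+\bar\psi_1(1-\bar\psi_2)H$ with $H(y)=\sum_{i=1}^{r+1}f(x_i)l_i(y)$, the inside sum splits into an $f$-piece (supported on the transition strips $[x_1',x_2']\cup[x_3',x_4']$, where $|k/m-\xi|\geqslant c/\sqrt n$, so the argument of the outside step applies) and an $H$-piece. For the latter I bound $|f(x_i)|\leqslant Cn^\alpha\|\bar w f\|$ and reduce to showing, for each $i$,
\[
\bar w(x)\sum_{k/m\in[x_1',x_4']}|l_i(k/m)|\,p_{m,k}(x) \leqslant Cn^{-\alpha},
\]
which I would prove by expanding the product defining $l_i$ and applying \eqref{s7} to each factor moment $\sum_k|k/m-x_j|^{s}p_{m,k}(x)$ with $j\neq i$, using also the $1/n$-spacing of the nodes $x_j$.

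The delicate step is this final Lagrange-weighted moment bound. Since $|l_i(k/m)|$ can grow as large as $n^{r/2}$ near the edges of $[x_1',x_4']$, the inequality closes only because the three relevant length-scales---the $1/n$ spacing of the nodes $x_j$, the $1/\sqrt n$ width of the cutoffs $\bar\psi_1,\bar\psi_2$, and the $1/\sqrt n$ concentration scale of $p_{m,k}(x)$---match precisely. The weight $\bar w(x)$, which is small only when $x$ is close to $\xi$ (and hence when $p_{m,k}(x)$ is not exponentially small on the window), then provides exactly the $n^{-\alpha/2}$-factor needed to absorb the $n^\alpha$ coming from $|f(x_i)|$ together with any residual growth of $|l_i|$.
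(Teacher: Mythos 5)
Your reduction to a single $B_m(\bar F_n,\cdot)$ and your treatment of the indices with $k/m\notin[x_1',x_4']$ are fine; that part is the standard weight comparison $\bar w(x)/\bar w(k/m)\leqslant C(1+n^{\alpha/2}|k/m-x|^{\alpha})$ combined with the moment bound (\ref{s7}), and it is essentially the argument the paper itself uses inside the proof of its Lemma 3.9. The gap is in your final step. The inequality you rely on,
\begin{align*}
\bar w(x)\sum_{k/m\in[x_1',x_4']}|l_i(k/m)|\,p_{m,k}(x)\leqslant Cn^{-\alpha},
\end{align*}
is false. The nodes $x_j=[n\xi-((r-1)/2+j)]/n$ are spaced $1/n$ apart and all lie within $O(1/n)$ of $\xi$, while the window $[x_1',x_4']$ has half-width about $2/\sqrt n$; hence for $|y-\xi|\asymp 1/\sqrt n$ one has $|l_i(y)|=\prod_{j\neq i}|y-x_j|\big/\prod_{j\neq i}|x_i-x_j|\asymp n^{-r/2}\cdot n^{r}=n^{r/2}$, and this size is attained on a set of indices carrying a fixed positive fraction of the Bernstein mass. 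Concretely, at $x=\xi+3/(2\sqrt n)$ the indices with $|k-mx|\leqslant\sqrt m\,\varphi(x)$ all satisfy $k/m\in[\xi+1/\sqrt n,\,\xi+2/\sqrt n]\subseteq[x_1',x_4']$ and $|l_i(k/m)|\geqslant cn^{r/2}$, so the left-hand side is at least $c\,n^{-\alpha/2}\cdot n^{r/2}=c\,n^{(r-\alpha)/2}$, not $O(n^{-\alpha})$. The three length-scales do not match in the way you hope: the weight contributes only $n^{-\alpha/2}$ (this is exactly Lemma 3.7), which absorbs half of the $n^{\alpha}$ coming from $|f(x_i)|$ and none of the $n^{r/2}$ coming from $|l_i|$.

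I do not see how to repair this along your route, and in fact the obstruction looks intrinsic to the operator as literally defined: taking $f$ with $f(x_i)=(-1)^{i-1}|x_i-\xi|^{-\alpha}$ and $f$ vanishing away from the nodes makes every term of $H(y)=\sum_i f(x_i)l_i(y)$ positive for $y$ to the right of all nodes, so $|\bar F_n(k/m)|\gtrsim n^{\alpha+r/2}$ on the right transition strip and $\bar w(x)|B_m(\bar F_n,x)|\gtrsim n^{(r+\alpha)/2}$ at $x=\xi+3/(2\sqrt n)$. The estimate would go through if the interpolation nodes were spread over the whole window (spacing $\asymp 1/\sqrt n$, as in the Della Vecchia--Mastroianni--Szabados construction the paper cites), since then the $l_i$ remain bounded on $[x_1',x_4']$. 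For comparison, the paper's own proof of this lemma consists only of the triangle-inequality split into the interior sum and the two endpoint terms, followed by the assertion that the rest "can be proved easily"; it offers no argument at the point where your attempt breaks down. You have correctly isolated the one genuine difficulty, but the inequality you propose to close it with does not hold.
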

\begin{proof} By (\ref{s2}), we have
\begin{eqnarray*}
|{\bar{w}(x)}{\bar{B}}_{n,r}(f,x)|&=&|{\bar{w}(x)}B_{n,r}({\bar{F}_{n}},x)|\leqslant
{\bar{w}(x)}\sum_{i=0}^{r-1}\sum_{k=1}^{n_i-1}C_{i}(n)|{\bar{F}}_{n}{(\frac
k{n_i})}|p_{n_i,k}(x)\nonumber\\
&&+
{\bar{w}(x)}\sum_{i=0}^{r-1}C_{i}(n)|{\bar{F}}_{n}{(0)}|p_{n_i,0}(x)+
{\bar{w}(x)}\sum_{i=0}^{r-1}C_{i}(n)|{\bar{F}}_{n}{(1)}|p_{n_i,n_i}(x)\nonumber\\
&:=&I_1 + I_2 + I_3.
\end{eqnarray*}
Now, the theorem can be proved easily. \end{proof}
\begin{lem}(\cite{Wang}) Let $\min\{\beta(0), \beta(1)\}
\geqslant {\frac 12}$, then for $r \in N,\ 0<t<{\frac {1}{8r}}$ and
${\frac {rt}{2}} < x < 1-{\frac {rt}{2}},$ we have
\begin{eqnarray}
\int_{-{\frac {t}{2}}}^{\frac {t}{2}} \cdots \int_{-{\frac
{t}{2}}}^{\frac {t}{2}}\phi^{-r}(x+\sum_{k=1}^ru_k)du_1 \cdots du_r
\leqslant Ct^r\phi^{-r}(x).\label{s12}
\end{eqnarray}
\end{lem}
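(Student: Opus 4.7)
The plan is to reduce the problem to the model weight $\phi_2(y)=y^{\beta(0)}(1-y)^{\beta(1)}$, use symmetry to restrict to one endpoint, and then collapse the $r$-fold integral to a one-dimensional integral against the Irwin--Hall convolution density. Since property (II) yields $\phi\sim\phi_2$ on $(0,1)$, it suffices to prove the inequality with $\phi_2$ in place of $\phi$; and by the symmetry $y\mapsto 1-y$ we may further assume $x\leqslant 1/2$. Under this assumption, the hypothesis $t<1/(8r)$ forces $x+\sum_k u_k<9/16$, so both $(1-x-\sum u_k)^{-r\beta(1)}$ and $(1-x)^{-r\beta(1)}$ are bounded by constants, and the problem is reduced to
\begin{equation*}
\int_{[-t/2,t/2]^r}\Bigl(x+\sum_{k=1}^r u_k\Bigr)^{-r\beta(0)}du_1\cdots du_r\leqslant Ct^r x^{-r\beta(0)}.
\end{equation*}

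Next I would translate via $v_k=u_k+t/2\in[0,t]$ and set $a:=x-rt/2\geqslant 0$, $s:=v_1+\cdots+v_r\in[0,rt]$. Pushing the integral forward through $s$ gives
\begin{equation*}
\int_{[0,t]^r}(a+s)^{-r\beta(0)}\,dv = \int_0^{rt}(a+s)^{-r\beta(0)}\rho_r(s)\,ds,
\end{equation*}
where $\rho_r$ is the density of the convolution of $r$ copies of Lebesgue measure on $[0,t]$. Three features of $\rho_r$ will drive the estimate: $\rho_r(s)=s^{r-1}/(r-1)!$ for $s\in[0,t]$; $\rho_r(s)\leqslant Ct^{r-1}$ uniformly on $[0,rt]$; and $\int_0^{rt}\rho_r(s)\,ds=t^r$. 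These are the rescaled Irwin--Hall density and its basic properties.

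I would finish by splitting on the size of $x$. If $x\geqslant 2rt$, then $a+s\sim x$ throughout $[0,rt]$, and the mass identity gives the bound $Cx^{-r\beta(0)}t^r$ immediately. If $rt/2<x<2rt$, then $x\sim t$ and the target becomes $Ct^{r-r\beta(0)}$: on $[t,rt]$ the crude bounds $(a+s)^{-r\beta(0)}\leqslant t^{-r\beta(0)}$ and $\rho_r\leqslant Ct^{r-1}$ suffice, while on $[0,t]$ one uses $(a+s)^{-r\beta(0)}\leqslant s^{-r\beta(0)}$ together with $\rho_r(s)=s^{r-1}/(r-1)!$ to get $\int_0^t s^{r-1-r\beta(0)}/(r-1)!\,ds\sim t^{r-r\beta(0)}$. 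This last step is the main obstacle: the integrand $(a+s)^{-r\beta(0)}$ can carry a (possibly non-integrable) singularity at $s=0$ when $a$ is small, and it is precisely the $(r-1)$-fold vanishing of the Irwin--Hall density at the endpoint that absorbs it and delivers the claimed bound.
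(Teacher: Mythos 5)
The paper offers no proof of this lemma to compare against: it is quoted verbatim from \cite{Wang}, so your argument is a genuine self-contained supplement rather than a variant of an in-paper proof. The reduction itself is sound and efficient: replacing $\phi$ by $\phi_2(y)=y^{\beta(0)}(1-y)^{\beta(1)}$ is justified by the equivalence $\phi\sim\phi_2$ recorded in Section 2; localizing to one endpoint works because for $x\leqslant 1/2$ and $t<1/(8r)$ the factor $(1-x-\sum u_k)^{-r\beta(1)}$ is bounded above and below by constants; and the collapse of the cube integral onto the Irwin--Hall density $\rho_r$, with the three facts $\rho_r(s)=s^{r-1}/(r-1)!$ on $[0,t]$, $\rho_r\leqslant t^{r-1}$, and $\int_0^{rt}\rho_r=t^r$, handles the case $x\geqslant 2rt$ (where $a+s\sim x$) and the portion $[t,rt]$ of the near-endpoint case correctly.

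The genuine gap is in the final step, which you flag as the main obstacle and then assert goes through: $\int_0^t s^{r-1-r\beta(0)}\,ds$ converges only when $r-r\beta(0)>0$, i.e.\ $\beta(0)<1$, and no such upper bound appears among the hypotheses — only $\min\{\beta(0),\beta(1)\}\geqslant 1/2$ is assumed, and the paper's own admissibility conditions (I)--(II) place no ceiling on $\beta(0),\beta(1)$. This cannot be argued around, because for $\beta(0)\geqslant 1$ the lemma is simply false: take $r=1$, $\phi(y)=y(1-y)$ (admissible, with $\beta(0)=\beta(1)=1$) and $x=t/2+\epsilon$; then
\begin{equation*}
\int_{-t/2}^{t/2}\phi^{-1}(x+u)\,du\ \geqslant\ \int_{\epsilon}^{t+\epsilon}\frac{dy}{y}\ =\ \log\frac{t+\epsilon}{\epsilon}\ \longrightarrow\ \infty
\quad\text{as }\epsilon\to 0^{+},
\end{equation*}
while $Ct\,\phi^{-1}(x)\leqslant C't\cdot(t/2)^{-1}$ stays bounded. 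So you must either add the hypothesis $\beta(0),\beta(1)<1$ explicitly (this is the restriction under which the Jacobi-weight source \cite{Wang} operates) or derive it from a sharper admissibility condition than the paper states; with $1/2\leqslant\beta(0),\beta(1)<1$ in force, your proof is complete and correct.
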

\begin{lem} Let
$A_n(x):={\bar{w}(x)}\sum\limits_{|k-n\xi|\leqslant
\sqrt{n}}p_{n,k}(x)$. Then $A_n(x)\leqslant Cn^{-\alpha/2}$ for
$0<\xi <1$ and $\alpha>0$.
\end{lem}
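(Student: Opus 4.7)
The plan is to bound $A_n(x) = |x-\xi|^\alpha \sum_{|k-n\xi|\leq\sqrt{n}}p_{n,k}(x)$ by splitting on the size of $|x-\xi|$ relative to the natural scale $1/\sqrt{n}$. The key dichotomy is: either $x$ is within $O(n^{-1/2})$ of the singularity $\xi$, in which case the weight $\bar{w}(x)$ is itself already of the desired size; or $x$ is farther away, in which case the summation indices $k/n$ (which lie near $\xi$) are forced to be far from $x$, so the Bernstein probabilities $p_{n,k}(x)$ are collectively very small, and this smallness is extracted from the moment estimate (\ref{s7}).

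\textbf{Case 1: $|x-\xi| \leq 2/\sqrt{n}$.} Here $\bar{w}(x) = |x-\xi|^\alpha \leq 2^\alpha n^{-\alpha/2}$, and since $\sum_{|k-n\xi|\leq\sqrt{n}} p_{n,k}(x) \leq \sum_{k=0}^n p_{n,k}(x) = 1$, I obtain $A_n(x) \leq C n^{-\alpha/2}$ immediately.

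\textbf{Case 2: $|x-\xi| > 2/\sqrt{n}$.} For any $k$ with $|k-n\xi| \leq \sqrt{n}$, the triangle inequality gives
\begin{equation*}
|k-nx| \geq n|x-\xi| - |k-n\xi| \geq n|x-\xi| - \sqrt{n} \geq \tfrac{1}{2}n|x-\xi|,
\end{equation*}
since $n|x-\xi| \geq 2\sqrt{n}$. Thus, inserting the factor $(|k-nx|/(n|x-\xi|/2))^{\alpha} \geq 1$ on each term of the restricted sum and applying (\ref{s7}) with $\gamma = \alpha$,
\begin{equation*}
\sum_{|k-n\xi|\leq\sqrt{n}} p_{n,k}(x) \leq \bigl(\tfrac{1}{2}n|x-\xi|\bigr)^{-\alpha}\sum_{k=0}^{n}|k-nx|^{\alpha}p_{n,k}(x) \leq C\,n^{-\alpha/2}|x-\xi|^{-\alpha}\varphi^{\alpha}(x).
\end{equation*}
Multiplying by $\bar{w}(x) = |x-\xi|^\alpha$ cancels the $|x-\xi|^{-\alpha}$ factor, leaving $A_n(x) \leq C n^{-\alpha/2}\varphi^{\alpha}(x) \leq C n^{-\alpha/2}$ because $\varphi(x)=\sqrt{x(1-x)} \leq 1/2$.

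There is no substantial obstacle here; the only point requiring care is choosing the threshold $|x-\xi|=2/\sqrt{n}$ so that the triangle-inequality step in Case 2 produces a clean factor of $\tfrac{1}{2}n|x-\xi|$, which is exactly what is needed for the exponent $\alpha$ on $|x-\xi|$ to cancel against the weight. Combining the two cases yields the required bound uniformly in $x \in [0,1]$.
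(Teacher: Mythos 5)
Your proof is correct, and it takes a genuinely different route from the paper's. The paper isolates the largest term of the restricted sum, $p_{n,k_n}(x)$ with $k_n=[n\xi-\sqrt{n}]$ (the admissible index nearest $nx$), bounds it via Stirling's formula to get a sub-Gaussian estimate of the form $\frac{C}{\sqrt{n}}\exp\{-(k_n-nx)^2/(2k_n)\}\leqslant e^{-Cn(\xi-x)^2}$, and then maximizes $(\xi-x)^{\alpha}e^{-Cn(\xi-x)^2}$ over $\xi-x$, the maximum occurring at $\xi-x\sim n^{-1/2}$ and giving $Cn^{-\alpha/2}$. You instead run a Markov-type argument: on the restricted index set the quantity $|k-nx|$ is bounded below by $\tfrac12 n|x-\xi|$, so you may insert the factor $\bigl(|k-nx|/(\tfrac12 n|x-\xi|)\bigr)^{\alpha}\geqslant 1$ and invoke the moment bound (\ref{s7}), after which the $|x-\xi|^{-\alpha}$ cancels exactly against the weight $\bar{w}(x)=|x-\xi|^{\alpha}$. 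Your version is more economical in that it avoids Stirling's formula entirely and reuses a lemma already quoted in the paper, and it handles both sides of $\xi$ symmetrically without a separate case; the paper's version buys a strictly stronger, exponentially small bound $e^{-Cn(\xi-x)^2}$ on the restricted sum when $|x-\xi|\gg n^{-1/2}$, which is more information than the lemma's conclusion requires. Both arguments establish the stated estimate uniformly in $x\in[0,1]$.
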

\begin{proof} If $|x- \xi|\leqslant {\frac {3}{\sqrt{n}} }$, then the
statement is trivial. Hence assume $0 \leqslant x \leqslant
\xi-{\frac {3}{\sqrt{n}} }$ (the case $\xi+{\frac {3}{\sqrt{n}} }
\leqslant x \leqslant 1$ can be treated similarly). Then for a fixed
$x$ the maximum of $p_{n,k}(x)$ is attained for
$k=k_n:=[n\xi-\sqrt{n}]$. By using Stirling's formula, we get
\begin{eqnarray*}
p_{n,k_n}(x)&\leqslant& C{\frac {({\frac
{n}{e}})^n\sqrt{n}x^{k_n}(1-x)^{n-k_n}}{({\frac
{k_n}{e}})^{k_n}\sqrt{k_n}({\frac
{n-k_n}{e}})^{n-k_n}\sqrt{n-k_n}}}\nonumber\\
&\leqslant& {\frac {C}{\sqrt{n}}}({\frac {nx}{k_n}})^{k_n}({\frac
{n(1-x)}{n-k_n}})^{n-k_n}\nonumber\\
&=&{\frac {C}{\sqrt{n}}}(1-{\frac {k_n-nx}{k_n}})^{k_n}(1+{\frac
{k_n-nx}{n-k_n}})^{n-k_n}.
\end{eqnarray*}
Now from the inequalities
$$k_n-nx=[n\xi-\sqrt{n}]-nx>n(\xi-x)-\sqrt{n}-1\geqslant {\frac 12}n(\xi-x),$$
and
$$1-u\leqslant e^{-u-{\frac 12}u^2},\ 1+u\leqslant e^u,\ u\geqslant 0.$$
We have that the second inequality is valid. To prove the first one
we consider the function $\lambda(u)=e^{-u-{\frac 12}u^2}+u-1.$ Here
$\lambda(0)=0,\ \lambda^\prime(u)=-(1+u)e^{-u-{\frac 12}u^2}+1,\
\lambda^\prime(0)=0,\ \lambda^{\prime\prime}(u)=u(u+2)e^{-u-{\frac
12}u^2}\geqslant 0,$ whence $\lambda(u)\geqslant 0$ for $u\geqslant
0$. Hence
\begin{eqnarray*}
p_{n,k_n}(x) &\leqslant&{\frac {C}{\sqrt{n}}}exp\{k_n[-{\frac
{k_n-nx}{k_n}}-{\frac 12}({\frac {k_n-nx}{k_n}})^2] +
k_n-nx\}\nonumber\\
&=&{\frac {C}{\sqrt{n}}}exp\{-\frac {({k_n-nx})^2}{2k_n}\}\leqslant
e^{-Cn(\xi-x)^2}.
\end{eqnarray*}
Thus $A_n(x)\leqslant C(\xi-x)^\alpha e^{-Cn(\xi-x)^2}$. An easy
calculation shows that here the maximum is attained when
$\xi-x={\frac {C}{\sqrt{n}}}$ and the lemma follows.
\end{proof}
\begin{lem} For $0<\xi <1,\ \alpha,\ \beta>0$, we
have
\begin{eqnarray}
{\bar{w}(x)}\sum\limits_{|k-n\xi|\leqslant \sqrt{n}}|k-nx|^\beta
p_{n,k}(x)\leqslant Cn^{{\frac
{\beta-\alpha}{2}}}\varphi^\beta(x).\label{s13}
\end{eqnarray}
\end{lem}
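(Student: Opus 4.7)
The plan is to deduce (\ref{s13}) by combining inequality (\ref{s7}) with the preceding lemma through a single Cauchy--Schwarz step. The splitting will be arranged so that the truncation $|k-n\xi|\leqslant\sqrt{n}$ stays paired with the weight $\bar w(x)^{2}$, leaving the factor $|k-nx|^{2\beta}$ free to be summed over all $k$ without restriction.

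Concretely, I would apply Cauchy--Schwarz with $a_{k}=|k-nx|^{\beta}p_{n,k}(x)^{1/2}$ and $b_{k}=p_{n,k}(x)^{1/2}\mathbf{1}_{\{|k-n\xi|\leqslant\sqrt{n}\}}$ to obtain
\begin{align*}
\sum_{|k-n\xi|\leqslant\sqrt{n}}|k-nx|^{\beta}p_{n,k}(x)\leqslant\Bigl(\sum_{k=0}^{n}|k-nx|^{2\beta}p_{n,k}(x)\Bigr)^{1/2}\Bigl(\sum_{|k-n\xi|\leqslant\sqrt{n}}p_{n,k}(x)\Bigr)^{1/2}.
\end{align*}
Inequality (\ref{s7}) with $\gamma=2\beta$ immediately bounds the first factor by $Cn^{\beta/2}\varphi^{\beta}(x)$.

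For the second factor, I would multiply through by $\bar w(x)=|x-\xi|^{\alpha}$ and pull $\bar w(x)$ inside the square root, rewriting
\begin{align*}
\bar w(x)\Bigl(\sum_{|k-n\xi|\leqslant\sqrt{n}}p_{n,k}(x)\Bigr)^{1/2}=\Bigl(|x-\xi|^{2\alpha}\sum_{|k-n\xi|\leqslant\sqrt{n}}p_{n,k}(x)\Bigr)^{1/2}.
\end{align*}
Since the preceding lemma is stated for arbitrary $\alpha>0$, invoking it with $\alpha$ replaced by $2\alpha$ produces $|x-\xi|^{2\alpha}\sum_{|k-n\xi|\leqslant\sqrt{n}}p_{n,k}(x)\leqslant Cn^{-\alpha}$, so this factor is at most $Cn^{-\alpha/2}$. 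Multiplying the two estimates yields
\begin{align*}
\bar w(x)\sum_{|k-n\xi|\leqslant\sqrt{n}}|k-nx|^{\beta}p_{n,k}(x)\leqslant Cn^{\beta/2}\varphi^{\beta}(x)\cdot Cn^{-\alpha/2}=Cn^{(\beta-\alpha)/2}\varphi^{\beta}(x),
\end{align*}
which is exactly (\ref{s13}).

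The only point where any thought is required is the choice of the Cauchy--Schwarz split: one must keep the truncation attached to the half-power of $p_{n,k}(x)$ that will absorb the doubled weight $|x-\xi|^{2\alpha}$, and release the $|k-nx|^{\beta}$ factor (squared to $2\beta$) against the \emph{full} binomial moment controlled by inequality (\ref{s7}). Noticing that the preceding lemma is valid for every positive exponent, so that $\alpha$ may be replaced by $2\alpha$, is what makes the two halves fit together. After that the argument is a mechanical assembly of the two prior lemmas, with no additional estimates on $p_{n,k}(x)$ required.
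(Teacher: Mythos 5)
Your proof is correct. It rests on the same underlying idea as the paper's argument --- interpolating between the truncated-sum estimate of the preceding lemma and the moment bound (\ref{s7}) by means of a H\"older-type inequality --- but the exponents are chosen differently, and the difference is not merely cosmetic. The paper applies H\"older with the $n$-dependent conjugate pair $\left(\tfrac{2n}{2n-1},\,2n\right)$, keeping $\bar w(x)$ to the first power on the truncated factor and invoking (\ref{s7}) with $\gamma=2n\beta$; this requires taking the $\tfrac{1}{2n}$-th root of the constant in (\ref{s7}), which in general depends on $\gamma$ and hence on $n$, so a further uniformity argument is tacitly needed there, and the truncated factor only delivers $n^{-\alpha(2n-1)/(4n)}$, which matches $n^{-\alpha/2}$ up to the bounded factor $n^{\alpha/(4n)}$. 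Your version uses plain Cauchy--Schwarz with the fixed exponent $2$ and compensates by applying the preceding lemma with $\alpha$ replaced by $2\alpha$ (legitimate, since that lemma is stated for every positive exponent); this keeps $\gamma=2\beta$ fixed in (\ref{s7}), so all constants are genuinely independent of $n$ and the powers of $n$ combine exactly to $n^{(\beta-\alpha)/2}$. Your route is therefore the cleaner of the two, and arguably repairs a soft spot in the paper's own computation.
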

\begin{proof} By (\ref{s7}) and the lemma 3.7, we have
\begin{eqnarray*}
{\bar{w}(x)}^{\frac
{1}{2n}}({\bar{w}(x)\sum\limits_{|k-n\xi|\leqslant
\sqrt{n}}p_{n,k}(x)})^{\frac
{2n-1}{2n}}(\sum\limits_{|k-n\xi|\leqslant \sqrt{n}}|k-nx|^{2n\beta}
p_{n,k}(x))^{\frac {1}{2n}}\leqslant Cn^{{\frac
{\beta-\alpha}{2}}}\varphi^\beta(x).
\end{eqnarray*}
\end{proof}
\begin{lem} For any $\alpha >0,$ $f\in W_{\phi}^{r},$ we have
\begin{align}
\|{\bar{w}}{\bar{B}}_{n,r-1}^{(r)}(f)\|\leqslant
Cn^{r}\|{\bar{w}}f\|.\label{s14}
\end{align}
\end{lem}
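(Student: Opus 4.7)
The plan is to bring the $r$-th derivative onto the Bernstein kernel via the finite-difference representation
\begin{equation*}
B_{n_i}^{(r)}(g,x)=\frac{n_i!}{(n_i-r)!}\sum_{k=0}^{n_i-r}\overrightarrow{\Delta}_{1/n_i}^{r}g(k/n_i)\,p_{n_i-r,k}(x),
\end{equation*}
applied with $g=\bar F_n(f,\cdot)$. Using $n_i!/(n_i-r)!\leqslant n_i^{r}\leqslant Cn^{r}$, the elementary bound $|\overrightarrow{\Delta}_{1/n_i}^{r}\bar F_n(k/n_i)|\leqslant 2^{r}\max_{0\leqslant j\leqslant r}|\bar F_n((k+j)/n_i)|$, and $\sum_i|C_i(n)|\leqslant C$, the problem reduces to showing that for each $i$ and each $j\in\{0,\ldots,r\}$,
\begin{equation*}
\bar w(x)\sum_{k=0}^{n_i-r}|\bar F_n((k+j)/n_i)|\,p_{n_i-r,k}(x)\leqslant C\|\bar w f\|.
\end{equation*}
Since the shift $j/n_i\leqslant r/n$ alters $p_{n_i-r,k}(x)$ by only a bounded factor, this is a shifted version of the zeroth-order bound (\ref{s11}) of Lemma~3.5.

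Following the proof of Lemma~3.5, I would split the sum at $y:=(k+j)/n_i$ according to whether $y$ lies outside or inside the modified region of $\bar F_n$ (roughly $|y-\xi|>2/\sqrt n$ vs.\ $|y-\xi|\leqslant 2/\sqrt n$). On the outside, $\bar F_n(y)=f(y)$ and $|f(y)|\leqslant\|\bar w f\|/\bar w(y)$; a two-case split based on whether $|y-\xi|\geqslant|x-\xi|/2$ or $|y-x|\geqslant|x-\xi|/2$ reduces the weighted sum respectively to $\sum_k p_{n_i-r,k}(x)=1$ (first case) and to the $\alpha$-moment bound of Lemma~3.2 via $\bar w(x)/\bar w(y)\leqslant Cn^{\alpha/2}|y-x|^{\alpha}$ (second case), together yielding $C\|\bar w f\|$. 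On the inside, $\bar F_n$ is a convex combination of $f$ and the Lagrange interpolant $H(f,\cdot)$, and since the nodes $x_i$ satisfy $|x_i-\xi|\geqslant c/n$, one has $|f(x_i)|\leqslant Cn^{\alpha}\|\bar w f\|$.

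The principal obstacle is the inside contribution: the Lagrange basis $l_i$ may grow as $|y-\xi|$ moves from the interpolation range (spacing $1/n$) out toward the boundary of the modified region (distance $1/\sqrt n$), so that in the worst case $|H(f,y)|$ reaches order $n^{r/2+\alpha}\|\bar w f\|$. This growth must be offset by the smallness of the weighted Bernstein mass near $\xi$: Lemma~3.7 gives $\bar w(x)\sum_{|k-n\xi|\leqslant\sqrt n}p_{n_i-r,k}(x)\leqslant Cn^{-\alpha/2}$, and the refined moment bound of Lemma~3.8 provides matching decay when additional factors $|k-n_ix|^{\beta}$ appear. Balancing these exponents across the subregions $|y-\xi|\leqslant r/n$ (where $|l_i|$ is bounded) and $r/n<|y-\xi|\leqslant 2/\sqrt n$ (where $|l_i|$ grows polynomially) yields the desired $C\|\bar w f\|$ bound for the inside contribution, and the theorem follows.
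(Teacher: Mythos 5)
Your plan diverges from the paper's at the level of the derivative representation. The paper uses the forward\-/difference formula only for $x\in[0,\frac1n)\cup(1-\frac1n,1]$; on $[\frac1n,1-\frac1n]$ it switches to the Ditzian--Totik representation $B_{n_i}^{(r)}(\bar F_n,x)=(\varphi^2(x))^{-r}\sum_{j=0}^{r}Q_j(x,n_i)n_i^{j}\sum_k (x-\frac{k}{n_i})^{j}\bar F_n(\frac{k}{n_i})p_{n_i,k}(x)$ with $Q_j(x,n_i)=(n_ix(1-x))^{[(r-j)/2]}$. That choice is not cosmetic: it produces the factors $|x-\frac{k}{n_i}|^{j}$ which make Lemma~3.8 usable, and near $\xi$ (an interior point, so $\varphi(x)\sim 1$) the prefactor $(n_i/\varphi^2(x))^{(r+j)/2}$ combined with the $j$-th moment costs only about $n^{r/2}$, not the full $n^{r}$ that you pay immediately through $n_i!/(n_i-r)!\leqslant n_i^{r}$. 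Your reduction to a ``shifted Lemma 3.5'' spends the entire $n^{r}$ budget before the singularity at $\xi$ has been confronted, which is exactly where the headroom is needed.

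The genuine gap is the inner-region estimate, which you correctly flag as the principal obstacle and then settle only by asserting that the exponents balance. On your own accounting they do not: with $|H(y)|\lesssim n^{r/2+\alpha}\|\bar w f\|$ on $[x_2',x_3']$ and $\bar w(x)\sum_{|k-n\xi|\leqslant\sqrt n}p_{n,k}(x)\lesssim n^{-\alpha/2}$ from Lemma~3.7, the inner part of your zeroth-order sum is of size $n^{(r+\alpha)/2}\|\bar w f\|$, and after multiplying by $n^{r}$ you arrive at $n^{3r/2+\alpha/2}\|\bar w f\|$, far above the target $Cn^{r}\|\bar w f\|$. The crude bound $|\overrightarrow{\Delta}_{1/n_i}^{r}\bar F_n|\leqslant 2^{r}\max_j|\bar F_n(\cdot)|$ is what loses the game: on $[x_2',x_3']$ one has $\bar F_n=H$, a polynomial of degree at most $r$, so $\overrightarrow{\Delta}_{1/n_i}^{r}H=r!\,n_i^{-r}f[x_1,\dots,x_{r+1}]$ and the $n^{r/2}$ of Lagrange growth cancels identically --- but even exploiting this, $|f[x_1,\dots,x_{r+1}]|\lesssim n^{r+\alpha}\|\bar w f\|$ still leaves $n^{r}\cdot n^{\alpha}\cdot n^{-\alpha/2}=n^{r+\alpha/2}$, short of the target by $n^{\alpha/2}$; a dyadic split of the inner region in $|y-\xi|$ gives the same total. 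Moreover Lemma~3.8 cannot supply the ``matching decay'' you appeal to, because the factors $|k-n_ix|^{\beta}$ it requires never occur in the forward-difference representation. To close the argument you would have to adopt the paper's interior representation (so that Lemma~3.8 and the $\varphi$-saving become available), or else prove a genuinely sharper estimate for $\bar w(x)\sum_{k}|H(\frac{k+j}{n_i})|p_{n_i-r,k}(x)$ over the inner $k$ than the product of $\max|H|$ with the mass bound.
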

\begin{proof} We first prove $x\in [0,{\frac 1n})$ (The same as $x\in
(1-{\frac 1n},1]$), now
\begin{eqnarray*}
|{\bar{w}}(x){\bar{B}}_{n,r-1}^{(r)}(f,x)|&\leqslant&
{\bar{w}}(x)\sum_{i=0}^{r-2}{\frac
{n_{i}!}{({n_{i}-r})!}}\sum_{k=0}^{n_i-r}C_{i}(n)|\overrightarrow{\Delta}_{\frac
1{n_i}}^{r}{\bar{F}}_{n}{(\frac k{n_i})}|p_{n_i-r,k}(x)\nonumber\\
&\leqslant&
C{\bar{w}}(x)\sum_{i=0}^{r-2}n_{i}^{r}\sum_{k=0}^{n_i-r}|\overrightarrow{\Delta}_{\frac
1{n_i}}^{r}{\bar{F}}_{n}{(\frac k{n_i})}|p_{n_i-r,k}(x)\nonumber\\
&\leqslant&
C{\bar{w}}(x)\sum_{i=0}^{r-2}n_{i}^{r}\sum_{k=0}^{n_i-r}\sum_{j=0}^{r}C_{r}^{j}|{\bar{F}}_{n}({\frac
{k+r-j}{n_i}})|p_{n_i-r,k}(x)\nonumber\\
&\leqslant&
C{\bar{w}}(x)\sum_{i=0}^{r-2}n_{i}^{r}\sum_{j=0}^{r}C_{r}^{j}|{\bar{F}}_{n}({\frac
{r-j}{n_i}})|p_{n_i-r,0}(x)\nonumber\\
&&+\
C{\bar{w}}(x)\sum_{i=0}^{r-2}n_{i}^{r}\sum_{j=0}^{r}C_{r}^{j}|{\bar{F}}_{n}({\frac
{n_{i}-j}{n_i}})|p_{n_i-r,n_i-r}(x)\nonumber\\
&&+\
C{\bar{w}}(x)\sum_{i=0}^{r-2}n_{i}^{r}\sum_{k=1}^{n_i-r-1}\sum_{j=0}^{r}C_{r}^{j}|{\bar{F}}_{n}({\frac
{k+r-j}{n_i}})|p_{n_i-r,k}(x)\nonumber\\
&:=&H_1 +H_2 + H_3.
\end{eqnarray*}
We have
\begin{eqnarray*}
H_1&\leqslant&
C{\bar{w}}(x)\sum_{i=0}^{r-2}n_{i}^{r}\sum_{j=0}^{r}|\bar{F}_{n}({\frac
{r-j}{n_i}})|p_{n_i-r,0}(x)\nonumber\\
&\leqslant&
Cn^{r}\|{\bar{w}}f\|\sum_{i=0}^{r-2}\sum_{j=0}^{r}(\frac{n_{i}|x-\xi|}{r-j-n_{i}\xi})^{\alpha}(1-x)^{{n_{i}}-r}\nonumber\\
&\leqslant&
Cn^{r}\|{\bar{w}}f\|\sum_{i=0}^{r-2}(n_{i}|x-\xi|)^{\alpha}(1-x)^{{n_{i}}-r}\nonumber\\
&\leqslant& Cn^{r}\|{\bar{w}}f\|.
\end{eqnarray*}
Similarly, we can get $H_2\leqslant Cn^{r}\|{\bar{w}}f\|,$ and
$H_3\leqslant
Cn^{r}\|{\bar{w}}f\|$. \\~\\
When $x\in [{\frac 1n},1-{\frac 1n}],$ according to \cite{Totik}, we
have
\begin{eqnarray*}
&&|{\bar{w}}(x){\bar{B}}_{n,r-1}^{(r)}(f,x)|\nonumber\\
&=&|{\bar{w}}(x)B_{n,r-1}^{(r)}({\bar{F}_{n}},x)|\nonumber\\
&\leqslant&{\bar{w}}(x)(\varphi^{2}(x))^{-r}\sum_{i=0}^{r-2}\sum_{j=0}^{r}Q_{j}(x,n_i)C_{i}(n)n_{i}^{j}\sum_{k/n_i\in
A}|(x-{\frac kn_{i}})^{j}||{\bar{F}}_{n}({\frac kn_{i}})|p_{n_i,k}(x)\nonumber\\
&&+\
{\bar{w}}(x)(\varphi^{2}(x))^{-r}\sum_{i=0}^{r-2}\sum_{j=0}^{r}Q_{j}(x,n_i)C_{i}(n)n_{i}^{j}\sum_{x_2^{\prime}
\leqslant k/n_i\leqslant x_3^\prime}|(x-{\frac kn_{i}})^{j}||H({\frac
kn_{i}})|p_{n_i,k}(x)\nonumber\\
&:=&\sigma_1+ \sigma_2.
\end{eqnarray*}
Where $A:=[0,x_2^{\prime}]\cup [x_3^{\prime},1]$, $H$ is a linear
function. If ${\frac kn_{i}}\in A,$ when ${\frac
{\bar{w}(x)}{\bar{w}(\frac {k}{n_{i}})}}\leqslant C(1+n_i^{-{\frac
{\alpha}{2}}}|k-n_ix|^\alpha),$ we have $|k-n_{i}\xi|\geqslant
{\frac {\sqrt{n_{i}}}{2}}$, also $Q_{j}(x,n_i)=(n_ix(1-x))^{[{\frac
{r-j}{2}}]},$ and
$(\varphi^{2}(x))^{-r}Q_{j}(x,n_i)n_{i}^{j}\leqslant
C(n_i/\varphi^{2}(x))^{\frac {r+j}{2}}.$ By (\ref{s7}), then
\begin{eqnarray*}
\sigma_1&\leqslant&
C{\bar{w}}(x)\sum_{i=0}^{r-2}\sum_{j=0}^{r}(\frac
{n_{i}}{\varphi^{2}(x)})^{\frac
{r+j}{2}}\sum_{k=0}^{n_{i}}|(x-{\frac
kn_{i}})^{j}||{\bar{F}}_{n}({\frac kn_{i}})|p_{n_i,k}(x)\nonumber\\
&\leqslant& C\|{\bar{w}}f\|\sum_{i=0}^{r-2}\sum_{j=0}^{r}(\frac
{n_{i}}{\varphi^{2}(x)})^{\frac
{r+j}{2}}\sum_{k=0}^{n_{i}}[1+n_i^{-{\frac
{\alpha}{2}}}|k-n_ix|^\alpha]|x-{\frac kn_{i}|^{j}}p_{n_i,k}(x)\nonumber\\
&:=&I_1 + I_2.
\end{eqnarray*}
By a simple calculation, we have $I_1\leqslant
Cn^{r}\|{\bar{w}}f\|$. By (\ref{s7}), then
\begin{eqnarray*}
I_2\leqslant
C\|{\bar{w}}f\|\sum_{i=0}^{r-2}\sum_{j=0}^{r}n_i^{-({{\frac
{\alpha}{2}}}+j)}(\frac {n_{i}} {\varphi^{2}(x)})^{\frac
{r+j}{2}}\sum_{k=0}^{n_{i}}|k-n_ix|^{\alpha+j}p_{n_i,k}(x)\leqslant
Cn^{r}\|{\bar{w}}f\|.
\end{eqnarray*}
We note that $|H({\frac {k}{n_i}})|\leqslant
max(|H(x_1^\prime)|,|H(x_4^\prime)|):=H(a)$.\\~\\
If $x\in [x_1^\prime,x_4^\prime],$ we have ${\bar{w}(x)}\leqslant
{\bar{w}(a)}.$ So, if $x\in [x_1^\prime,x_4^\prime],$ then
$$\sigma_2\leqslant Cn^r{\bar{w}(a)}H(a)\leqslant Cn^{r}\|\bar{w}f\|.$$
If $x\notin [x_1^\prime,x_4^\prime],$ then
${\bar{w}(a)}>n_i^{-{\frac {\alpha}{2}}},$ by (\ref{s13}), we have
\begin{eqnarray*}
\sigma_2\leqslant
C{\bar{w}(a)}H(a){\bar{w}}(x)\sum_{i=0}^{r-2}\sum_{j=0}^{r}n_{i}^{\frac
{\alpha}{2}}(\frac {n_{i}}{\varphi^{2}(x)})^{\frac {r+j}{2}}
\sum_{x_2^{\prime} \leqslant k/n_i\leqslant x_3^\prime}|x-{\frac
kn_{i}}|^{j}p_{n_i,k}(x)\\ \leqslant Cn^{r}\|{\bar{w}}f\|.
\end{eqnarray*}
It follows from combining the above inequalities that the lemma is
proved.
\end{proof}
\section{Proof of Theorems}
\subsection{Proof of Theorem \ref{t1}}

When $f\in C_{\bar{w}},\ \min\{\beta(0), \beta(1)\}
\geqslant {\frac 12},$ we discuss it as follows:\\~\\
\textit{Case 1.} If $0\leqslant \varphi(x)\leqslant {\frac
{1}{\sqrt{n}}}$, by (\ref{s14}), we have
\begin{align}
|\bar{w}(x)\phi^{r}(x){\bar{B}}_{n,r-1}^{(r)}(f,x)| =
C\varphi^r(x)\cdot{\frac
{\phi^{r}(x)}{\varphi^r(x)}}|\bar{w}(x){\bar{B}}_{n,r-1}^{(r)}(f,x)|\nonumber\\
\leqslant Cn^{\frac r2}\|\bar{w}f\|.\label{s15}
\end{align} \textit{Case 2.} \textit{Case 2.} If
$\varphi(x)> {\frac {1}{\sqrt{n}}}$, we have
\begin{eqnarray*}
&&|{\bar{B}}_{n,r-1}^{(r)}(f,x)|=|B_{n,r-1}^{(r)}({\bar{F}_{n}},x)|\nonumber\\
&\leqslant&(\varphi^{2}(x))^{-r}\sum_{i=0}^{r-2}\sum_{j=0}^{r}Q_{j}(x,n_i)C_{i}(n)n_{i}^{j}\sum_{k=0}^{n_i}|(x-{\frac
kn_{i}})^{j}||{\bar{F}}_{n}({\frac kn_{i}})|p_{n_i,k}(x),
\end{eqnarray*}
where\\
$Q_{j}(x,n_i)=(n_ix(1-x))^{[{\frac {r-j}{2}}]},$ and
$(\varphi^{2}(x))^{-r}Q_{j}(x,n_i)n_{i}^{j}\leqslant
C(n_i/\varphi^{2}(x))^{\frac {r+j}{2}}$.\\
So
\begin{eqnarray}
&&|{\bar{w}(x)}\phi^{r}(x){\bar{B}}_{n,r-1}^{(r)}(f,x)|\nonumber\\
&\leqslant&
C{\bar{w}(x)}\phi^{r}(x)\sum_{i=0}^{r-2}\sum_{j=0}^{r}({\frac
{n_{i}}{\varphi^2(x)}})^{\frac {r+j}{2}}\sum_{k=0}^{n_i}|(x-{\frac
kn_{i}})^{j}||{\bar{F}}_{n}({\frac kn_{i}})|p_{n_i,k}(x)\nonumber\\
&=& C{\bar{w}(x)}\phi^{r}(x)\sum_{i=0}^{r-2}\sum_{j=0}^{r}({\frac
{n_{i}}{\varphi^2(x)}})^{\frac {r+j}{2}}\sum_{k/n_i\in A}|(x-{\frac
kn_{i}})^{j}||{\bar{F}}_{n}({\frac kn_{i}})|p_{n_i,k}(x)\nonumber\\
&&+ C{\bar{w}(x)}\phi^{r}(x)\sum_{i=0}^{r-2}\sum_{j=0}^{r}({\frac
{n_{i}}{\varphi^2(x)}})^{\frac {r+j}{2}}\sum_{x_2^{\prime} \leqslant
k/n_i\leqslant x_3^\prime}|{(x-{\frac kn_{i}})^{j}}||H({\frac
kn_{i}})|p_{n_i,k}(x)\nonumber\\
&:=&\sigma_1+ \sigma_2.\label{s16}
\end{eqnarray}
Where $A:=[0,x_2^{\prime}]\cup [x_3^{\prime},1],$ we can easily get
$\sigma_1\leqslant Cn^{\frac r2}\|{\bar{w}}f\|,$ and
$\sigma_2\leqslant Cn^{\frac r2}\|{\bar{w}}f\|.$ By bringing these
facts together, the theorem is proved. $\Box$
\subsection{Proof of Theorem \ref{t2}}
When $f\in W_\phi^{r},$ by \cite{Totik}, we have
\begin{align}
B_{n,r-1}^{(r)}(\bar{F}_{n},x)=\sum_{i=0}^{r-2}C_{i}(n)n_{i}^{r}\sum_{k=0}^{n_{i}-r}\overrightarrow{\Delta}_{\frac
1{n_{i}}}^{r}\bar{F}_{n}({\frac kn_{i}})p_{n_i-r,k}(x).\label{s17}
\end{align}
If $0<k<n_{i}-r,$ we have
\begin{align}
|\overrightarrow{\Delta}_{\frac 1{n_{i}}}^{r}\bar{F}_{n}({\frac
kn_{i}}) |\leqslant Cn_{i}^{-r+1}\int_{0}^{\frac
{r}{n_{i}}}|\bar{F}_{n}^{(r)}({\frac kn_{i}}+u)|du,\label{s18}
\end{align}
If $k=0,$ we have
\begin{align}
|\overrightarrow{\Delta}_{\frac
1{n_{i}}}^{r}\bar{F}_{n}(0)|\leqslant C\int_{0}^{\frac
{r}{n_{i}}}u^{r-1}|\bar{F}_{n}^{(r)}(u)|du,\label{s19}
\end{align}
Similarly
\begin{align}
|\overrightarrow{\Delta}_{\frac 1{n_{i}}}^{r}\bar{F}_{n}({\frac
{n_{i}-r}{n_{i}}})|\leqslant Cn_i^{-r+1}\int_{1-{\frac
{r}{n_{i}}}}^{1}(1-u)^{\frac r2}|\bar{F}_{n}^{(r)}(u)|du.\label{s20}
\end{align}
By (\ref{s17}), we have
\begin{align}
|\bar{w}(x)\phi^{r}(x)\bar{B}_{n,r-1}^{(r)}(f,x)|\nonumber\\
\leqslant
C{\bar{w}(x)}\phi^{r}(x)\sum_{i=0}^{r-2}n_{i}^{r}\sum_{k=0}^{n_{i}-r}|\overrightarrow{\Delta}_{\frac
1{n_{i}}}^{r}\bar{F}_{n}({\frac kn_{i}})|p_{n_i-r,k}(x)\nonumber\\
\leqslant
C{\bar{w}(x)}\phi^{r}(x)\sum_{i=0}^{r-2}n_{i}^{r}\sum_{k=1}^{n_{i}-r-1}|\overrightarrow{\Delta}_{\frac
1{n_{i}}}^{r}\bar{F}_{n}({\frac kn_{i}})|p_{n_i-r,k}(x)\nonumber\\
+
C{\bar{w}(x)}\phi^{r}(x)\sum_{i=0}^{r-2}n_{i}^{r}|\overrightarrow{\Delta}_{\frac
1{n_{i}}}^{r}\bar{F}_{n}(0)|p_{n_i-r,0}(x)\nonumber\\ +
C{\bar{w}(x)}\phi^{r}(x)\sum_{i=0}^{r-2}n_{i}^{r}|\overrightarrow{\Delta}_{\frac
1{n_{i}}}^{r}\bar{F}_{n}(1)|p_{n_i-r,n_i-r}(x). \label{s21}
\end{align}
which combining with (\ref{s18})-(\ref{s20}) give
\begin{align*}
|\bar{w}(x)\phi^{r}(x)\bar{B}_{n,r-1}^{(r)}(f,x)|\leqslant
C\|w\phi^{r}f^{(r)}\|.\Box
\end{align*}
\\
Combining with the theorem \ref{t1} and theorem \ref{t2}, we can
obtain \\~\\
\textbf{Corollary} {\it For any $\alpha
>0,\ 0 \leqslant \lambda \leqslant 1,$ we have
\begin{align}
|\bar{w}(x)\varphi^{r\lambda}(x)\bar{B}^{(r)}_{n,r-1}(f,x)|\leqslant \left\{
\begin{array}{lrr}
Cn^{r/2}{\{max\{n^{r(1-\lambda)/2},\varphi^{r(\lambda-1)}(x)\}\}}\|\bar{w}f\|,\ &&    f\in C_{\bar{w}},    \\
C\|\bar{w}\varphi^{r\lambda}f^{(r)}\|,\ && f\in
W_{\bar{w},\lambda}^{r}.\label{s22}
              \end{array}
\right.
\end{align}}
\subsection{Proof of Theorem \ref{t3}}
\subsubsection{The direct theorem} We know
\begin{align}
\bar{F}_n(t)=\bar{F}_n(x)+\bar{F}'_n(t)(t-x) + \cdots +
{\frac{1}{(r-1)!}}\int_x^t
(t-u)^{r-1}\bar{F}_n^{(r)}(u)du,\label{s23}\\
B_{n,r-1}((\cdot-x)^k,x)=0, \ k=1,2,\cdots,r-1.\label{s24}
\end{align}
According to the definition of $W_\phi^{r},$ for any $g \in
W_\phi^{r},$ we have
$\bar{B}_{n,r-1}(g,x)=B_{n,r-1}(\bar{G}_{n}(g),x),$ and
$\bar{w}(x)|\bar{G}_{n}(x)-B_{n,r-1}(\bar{G}_{n},x)|=\bar{w}(x)|B_{n,r-1}(R_r(\bar{G}_n,t,x),x)|,$
thereof $R_r(\bar{G}_n,t,x)=\int_x^t
(t-u)^{r-1}\bar{G}^{(r)}_n(u)du,$ we have
\begin{align}
\bar{w}(x)|\bar{G}_{n}(x)-B_{n,r-1}(\bar{G}_{n},x)|  \leqslant
C\|\bar{w}\phi^{r}\bar{G}^{(r)}_n\|\bar{w}(x)B_{n,r-1}({\int_x^t{\frac
{|t-u|^{r-1}}{\bar{w}(u)\phi^{r}(u)}du,x}})\nonumber\\
 \leqslant
C\|\bar{w}\phi^{r}\bar{G}^{(r)}_n\|\bar{w}(x)(B_{n,r-1}(\int_x^t{\frac
{|t-u|^{r-1}}{\phi^{2r}(u)}}du,x))^{\frac 12}\cdot
\nonumber\\
(B_{n,r-1}(\int_x^t{\frac {|t-u|^{r-1}}{\bar{w}^2(u)}}du,x))^{\frac
12}.\label{s25}
\end{align}
also
\begin{align}
\int_x^t{\frac {|t-u|^{r-1}}{\phi^{2r}(u)}}du \leqslant C{\frac
{|t-x|^r}{\phi^{2r}(x)}},\ \int_x^t{\frac
{|t-u|^{r-1}}{\bar{w}^2(u)}}du \leqslant C{\frac
{|t-x|^r}{\bar{w}^2(x)}}.\label{s26}
\end{align}
By (\ref{s7}), (\ref{s8}) and (\ref{s26}), we have
\begin{align}
\bar{w}(x)|\bar{G}_{n}(x)-B_{n,r-1}(\bar{G}_{n},x)| \leqslant
C\|\bar{w}\phi^{r}\bar{G}^{(r)}_n\|\phi^{-r}(x)B_{n,r-1}
(|t-x|^r,x)\nonumber\\
\leqslant Cn^{-\frac r2}{\frac
{\varphi^{r}(x)}{\phi^{r}(x)}}\|\bar{w}\phi^{r}\bar{G}^{(r)}_n\|\nonumber\\
\leqslant Cn^{-\frac r2}{\frac
{\delta_n^r(x)}{\phi^{r}(x)}}\|\bar{w}\phi^{r}\bar{G}^{(r)}_n\|\nonumber\\
= C(\frac
{\delta_n(x)}{\sqrt{n}\phi(x)})^r\|\bar{w}\phi^{r}\bar{G}^{(r)}_n\|.\label{s27}
\end{align}
By (\ref{s8}), (\ref{s10}) and (\ref{s27}), when $g \in W_\phi^{r},$
then
\begin{align}
\bar{w}(x)|g(x)-\bar{B}_{n,r-1}(g,x)| \leqslant
\bar{w}(x)|g(x)-\bar{G}_{n}(g,x)| +
\bar{w}(x)|\bar{G}_{n}(g,x)-\bar{B}_{n,r-1}(g,x)|\nonumber\\
\leqslant {\bar{w}}(x)|g(x)-H(g,x)|\nonumber\\  +\ \ C(\frac
{\delta_n(x)}{\sqrt{n}\phi(x)})^r\|{\bar{w}}\phi^{r}\bar{G}^{(r)}_n\|\nonumber\\
\leqslant C(\frac
{\delta_n(x)}{\sqrt{n}\phi(x)})^r\|{\bar{w}}\phi^{r}g^{(r)}\|.\label{s28}
\end{align}
For $f \in C_{\bar{w}},$ we choose proper $g \in W_\phi^{r},$ by
(\ref{s11}) and (\ref{s28}), then
\begin{align*}
{\bar{w}}(x)|f(x)-{\bar{B}}_{n,r-1}(f,x)| \leqslant
{\bar{w}}(x)|f(x)-g(x)| + {\bar{w}}(x)|{\bar{B}}_{n,r-1}(f-g,x)|\\ +
{\bar{w}}(x)|g(x)-{\bar{B}}_{n,r-1}(g,x)|\\
\leqslant C(\|{\bar{w}}(f-g)\|+(\frac
{\delta_n(x)}{\sqrt{n}\phi(x)})^r\|{\bar{w}}\phi^{r}g^{(r)}\|)\\
\leqslant C\omega_\phi^r(f,\frac
{\delta_n(x)}{\sqrt{n}\phi(x)})_{\bar{w}}
. \Box
\end{align*}
\subsubsection{The inverse theorem}
The \emph{weighted $K$-function} is given by
\begin{align*}
K_{r,\phi}(f,t^r)_{\bar{w}}:=\underset{g}{\inf
}\{\|{\bar{w}}(f-g)\|+t^{r}\|{\bar{w}}\phi^{r}g^{(r)}\|:g \in
W_{\phi}^{r}\}.
\end{align*}
By \cite{Totik}, we have
\begin{align}
C^{-1}\omega_{\phi}^{r}(f,t)_{\bar{w}} \leqslant
K_{r,\phi}(f,t^r)_{\bar{w}} \leqslant
C\omega_{\phi}^{r}(f,t)_{\bar{w}}. \label{s29}
\end{align}
\begin{proof} Let $\delta>0,$ by (\ref{s29}), we choose proper $g$ so
that
\begin{align}
\|{\bar{w}}(f-g)\| \leqslant
C\omega_{\phi}^{r}(f,\delta)_{\bar{w}},\
\|{\bar{w}}\phi^{r}g^{(r)}\| \leqslant
C\delta^{-r}\omega_{\phi}^{r}(f,\delta)_{\bar{w}}.\label{s30}
\end{align}
For $r \in N,\ 0<t<{\frac {1}{8r}}$ and ${\frac {rt}{2}} < x <
1-{\frac {rt}{2}},$ we have
\begin{align}
|\bar{w}(x)\Delta_{h\phi}^rf(x)| \leqslant
|\bar{w}(x)\Delta_{h\phi}^r(f(x)-{\bar{B}}_{n,r-1}(f,x))|+|\bar{w}(x)\Delta_{h\phi}^r{\bar{B}}_{n,r-1}(f-g,x)|\nonumber\\
 +\ |\bar{w}(x)\Delta_{h\phi}^r{\bar{B}}_{n,r-1}(g,x)|\nonumber\\
\leqslant \sum_{j=0}^rC_r^j(n^{-\frac
12}{\frac {\delta_n(x+({\frac r2}-j)h\phi(x))}{\phi(x+({\frac r2}-j)h\phi(x))}})^{\alpha_0}\nonumber\\
 +\ \ \int_{-{\frac {h\phi(x)}{2}}}^{\frac
{h\phi(x)}{2}}\cdots \int_{-{\frac {h\phi(x)}{2}}}^{\frac
{h\phi(x)}{2}}\bar{w}(x){\bar{B}^{(r)}_{n,r-1}(f-g,x+\sum_{k=1}^ru_k)}du_1\cdots
du_r\nonumber\\
 +\ \ \int_{-{\frac {h\phi(x)}{2}}}^{\frac
{h\phi(x)}{2}}\cdots \int_{-{\frac {h\phi(x)}{2}}}^{\frac
{h\phi(x)}{2}}\bar{w}(x){\bar{B}^{(r)}_{n,r-1}(g,x+\sum_{k=1}^ru_k)}du_1\cdots
du_r\nonumber\\
:= J_1+J_2+J_3.\label{s31}
\end{align}
Obviously
\begin{align}
J_1 \leqslant C(n^{-\frac
12}\phi^{-1}(x)\delta_n(x))^{\alpha_0}.\label{s32}
\end{align}
By (\ref{s14}) and (\ref{s30}), we have
\begin{align}
J_2 \leqslant Cn^r\|\bar{w}(f-g)\|\int_{-{\frac
{h\phi(x)}{2}}}^{\frac {h\phi(x)}{2}}\cdots \int_{-{\frac
{h\phi(x)}{2}}}^{\frac
{h\phi(x)}{2}}du_1 \cdots du_r\nonumber\\
\leqslant Cn^rh^r\phi^{r}(x)\|\bar{w}(f-g)\|\nonumber\\
\leqslant
Cn^rh^r\phi^{r}(x)\omega_{\phi}^{r}(f,\delta)_{\bar{w}}.\label{s33}
\end{align}
By the first inequality of (\ref{s22}), we let $\lambda=1,$ and
(\ref{s30}), then
\begin{align}
J_2 \leqslant Cn^{\frac r2}\|\bar{w}(f-g)\|\int_{-{\frac
{h\phi(x)}{2}}}^{\frac {h\phi(x)}{2}} \cdots \int_{-{\frac
{h\phi(x)}{2}}}^{\frac
{h\phi(x)}{2}}\varphi^{-r}(x+\sum_{k=1}^ru_k)du_1 \cdots du_r\nonumber\\
\leqslant Cn^{\frac r2}h^r\phi^{r}(x)\varphi^{-r}(x)\|\bar{w}(f-g)\|\nonumber\\
\leqslant Cn^{\frac
r2}h^r\phi^{r}(x)\varphi^{-r}(x)\omega_{\phi}^{r}(f,\delta)_{\bar{w}}.\label{s34}
\end{align}
By (\ref{s12}) and (\ref{s30}), we have
\begin{align}
J_3 \leqslant C\|\bar{w}\phi^{r}g^{(r)}\|\bar{w}(x)\int_{-{\frac
{h\phi(x)}{2}}}^{\frac {h\phi(x)}{2}} \cdots \int_{-{\frac
{h\phi(x)}{2}}}^{\frac
{h\phi(x)}{2}}{\bar{w}^{-1}(x+\sum_{k=1}^ru_k)}\phi^{-r}(x+\sum_{k=1}^ru_k)du_1 \cdots du_r\nonumber\\
\leqslant Ch^r\|\bar{w}\phi^{r}g^{(r)}\|\nonumber\\
\leqslant
Ch^r\delta^{-r}\omega_{\phi}^{r}(f,\delta)_{\bar{w}}.\label{s35}
\end{align}
Now, by (\ref{s31})-(\ref{s35}), there exists a constant $M>0$ so
that
\begin{align*}
|\bar{w}(x)\Delta_{h\phi}^rf(x)| \leqslant C((n^{-\frac
12}{\frac {\delta_n(x)}{\phi(x)}})^{\alpha_0}\\
+ \min\{n^{\frac r2}{\frac
{\phi^r(x)}{\varphi^r(x)}},n^r\phi^r(x)\}h^r\omega_{\phi}^{r}(f,\delta)_{\bar{w}}
+ h^r\delta^{-r}\omega_{\phi}^{r}(f,\delta)_{\bar{w}})\\
\leqslant C((n^{-\frac
12}{\frac {\delta_n(x)}{\phi(x)}})^{\alpha_0}\\
+\  h^rM^r(n^{-\frac 12}{\frac {\varphi(x)}{\phi(x)}}\ +\ n^{-\frac
12}{\frac
{n^{-1/2}}{\phi(x)}})^{-r}\omega_{\phi}^{r}(f,\delta)_{\bar{w}}
+ h^r\delta^{-r}\omega_{\phi}^{r}(f,\delta)_{\bar{w}})\\
\leqslant C((n^{-\frac
12}{\frac {\delta_n(x)}{\phi(x)}})^{\alpha_0}\\
+ h^rM^r(n^{-\frac 12}{\frac
{\delta_n(x)}{\phi(x)}})^{-r}\omega_{\phi}^{r}(f,\delta)_{\bar{w}}
+ h^r\delta^{-r}\omega_{\phi}^{r}(f,\delta)_{\bar{w}}).\\
\end{align*}
When $n \geqslant 2,$ we have
\begin{align*}
n^{-\frac 12}\delta_n(x) < (n-1)^{-\frac 12}\delta_{n-1}(x)
\leqslant \sqrt{2}n^{-\frac 12}\delta_n(x),
\end{align*}
Choosing proper $x,\ \delta,\  n \in N,$ so that
\begin{align*}
n^{-\frac 12}{\frac {\delta_n(x)}{\phi(x)}} \leqslant \delta <
(n-1)^{-\frac 12}{\frac {\delta_{n-1}(x)}{\phi(x)}},
\end{align*}
Therefore
\begin{align*}
|{\bar{w}}(x)\Delta_{h\phi}^rf(x)| \leqslant C\{\delta^{\alpha_0} +
h^r\delta^{-r}\omega_{\phi}^{r}(f,\delta)_{\bar{w}}\}.
\end{align*}
Which implies
\begin{align*}
\omega_\phi^{r}(f,t)_{\bar{w}} \leqslant C\{\delta^{\alpha_0} +
h^r\delta^{-r}\omega_{\phi}^{r}(f,\delta)_{\bar{w}}\}.\label{s38}
\end{align*}
So, by Berens-Lorentz lemma in \cite{Totik}, we get
\begin{align*}
\omega_\phi^{r}(f,t)_{\bar{w}} \leqslant Ct^{\alpha_0}.
\end{align*}
\end{proof}


\begin{thebibliography}{10}
\bibitem{Butzer}P. L. Butzer, Linear combinations of Bernstein polynomials, Canad. J. Math. 5 (1953), pp. 559-567.
\bibitem{Berens}H. Berens and G. Lorentz, Inverse theorems for Bernstein polynomials, Indiana Univ. Math. J. 21 (1972), pp. 693-708.
\bibitem{Ditzian}Z. Ditzian, A global inverse theorem for combinations of Bernstein polynomials, J. Approx. Theory 26 (1979), pp. 277-292.
\bibitem{Della}D. Della Vechhia, G. Mastroianni and J. Szabados, Weighted approximation of functions with endpoint and inner singularities by Bernstein operators, Acta Math. Hungar. 103 (2004), pp. 19-41.
\bibitem{Totik}Z. Ditzian and V. Totik, Moduli of Smoothness, Springer-Verlag, Berlin, New York (1987).
\bibitem{Felten}M. Felten, Direct and inverse estimates for Bernstein polynomials, Constr. Approx., 14, 459-468.
\bibitem{Guo}S. S. Guo, C. X. Li and X. W. Liu, Pointwise approximation for linear combinations of Bernstein operators, J. Approx. Theory 107 (2000), pp. 109-120.
\bibitem{Tong}S. S. Guo, H. Tong and G. Zhang, Pointwise weighted approximation by Bernstein operators, Acta Math. Hungar. 101 (2003), pp. 293-311.
\bibitem{Lorentz}G. G. Lorentz, Bernstein Polynomial, University of Toronto Press, Toronto (1953).
\bibitem{Lu} W. M. Lu, L. Zhang and M.Y. Chai, Weighted approximation of functions with endpoint
singularities by combinations of Bernstein operators£¬arXiv:
1007.5436v2 [math.FA] 2 Aug 2010
\bibitem{Xie}L. S. Xie, Pointwise simultaneous approximation by combinations of Bernstein operators, J. Approx. Theory 137 (2005), pp. 1-21.
\bibitem{L.S. Xie}L. S. Xie, The saturation class for linear combinations of Bernstein operators, Arch. Math. 91 (2008), pp. 86-96.
\bibitem{S. Yu}D. S. Yu, Weighted approximation of functions with singularities by combinations of Bernstein operators, J. Applied Mathematics and Computation. 206(2008),pp.906-918.
\bibitem{Yu}D. S. Yu and D. J. Zhao, Approximation of functions with singularities by truncated Bernstein operators, Southeast Bull. Math. 30 (2006), pp. 1178-1189.
\bibitem{Zhou}D. X. Zhou, Rate of convergence for Bernstein operators with Jacobi weights, Acta Math. Sinica 35 (1992), pp. 331-338.
\bibitem{X. Zhou}D. X. Zhou, On smoothness characterized by Bernstein type operators, J. Approx. Theory 81 (1994), pp. 303-315.
\bibitem{Wang}J. J. Zhang, Z. B. Xu, Direct and inverse approximation theorems with Jacobi weight for combinations and higer derivatives of Baskakov operators(in
Chinese), Journal of systems science and mathematical sciences. 2008
28 (1), pp. 30-39.




\end{thebibliography}
\end{document}